\title{Discrete subgroups of locally definable groups}
\author[A.~Berarducci]{Alessandro Berarducci}
\address{Universit\`a di Pisa, Dipartimento di Matematica, Largo Bruno Pontecorvo 5, 56127 Pisa, Italy}
\email{berardu@dm.unipi.it}
\thanks{Partially supported by PRIN 2009WY32E8\_003: O-minimalit\`a, teoria degli insiemi, metodi e modelli non standard e applicazioni}
\author[M.~Edmundo]{M\'ario Edmundo}
\address{ Universidade Aberta and CMAF Universidade de Lisboa\\
Av. Prof. Gama Pinto 2\\
1649-003 Lisboa, Portugal}
\email{edmundo@cii.fc.ul.pt}
\thanks{Partially supported by Funda\c{c}\~ao para a Ci\^encia e a Tecnologia PEst OE/MAT/UI0209/2011}
\author[M.~Mamino]{Marcello Mamino}
\address{CMAF Universidade de Lisboa\\
Av. Prof. Gama Pinto 2\\
1649-003 Lisboa, Portugal}
\email{mamino@ptmat.fc.ul.pt}
\thanks{Partially supported by Funda\c{c}\~ao para a Ci\^encia e a Tecnologia grant SFRH/BPD/73859/2010}
\date{September 4, 2012}
\subjclass[2010]{03C64, 03C68, 22B99}
\keywords{Locally definable groups, covers, discrete subgroups}
\DeclareMathOperator{\R}{\mathbb R}
\DeclareMathOperator{\NN}{\mathbb N}
\DeclareMathOperator{\Q}{\mathbb Q}
\DeclareMathOperator{\Z}{\mathbb Z}
\DeclareMathOperator{\rank}{rank}
\newcommand{\nin}{\not\in}
\newcommand{\rh}{\operatorname{ch}}
\theoremstyle{plain}
\newtheorem{theorem}{Theorem}
\newtheorem*{Theorem}{Theorem}
\newtheorem{lemma}[theorem]{Lemma}
\newtheorem{proposition}[theorem]{Proposition}
\newtheorem{corollary}[theorem]{Corollary}
\newtheorem{conjecture}[theorem]{Conjecture}
\newtheorem{question}[theorem]{Question}
\newtheorem{fact}[theorem]{Fact}
\theoremstyle{definition}
\newtheorem{remark}[theorem]{Remark}
\newtheorem{definition}[theorem]{Definition}
\newtheorem{notation}[theorem]{Notation}
\newtheorem{discussion}[theorem]{Discussion}
\newtheorem{example}[theorem]{Example}
\newtheorem{exercise}[theorem]{Exercise}
\numberwithin{theorem}{section}
\newcommand{\bt}{\begin{theorem}}
\newcommand{\et}{\end{theorem}}
\newcommand{\bl}{\begin{lemma}}
\newcommand{\el}{\end{lemma}}
\newcommand{\bfa}{\begin{fact}}
\newcommand{\efa}{\end{fact}}
\newcommand{\bexa}{\begin{example}}
\newcommand{\eexa}{\end{example}}
\newcommand{\bexe}{\begin{exercise}}
\newcommand{\eexe}{\end{exercise}}
\newcommand{\bprop}{\begin{proposition}}
\newcommand{\eprop}{\end{proposition}}
\newcommand{\bp}{\begin{proof}}
\newcommand{\ep}{\end{proof}}
\newcommand{\bc}{\begin{corollary}}
\newcommand{\ec}{\end{corollary}}
\newcommand{\bd}{\begin{definition}}
\newcommand{\ed}{\end{definition}}
\newcommand{\br}{\begin{remark}}
\newcommand{\er}{\end{remark}}
\def\N{\mathbb{N}}
\def\st{\;|\;}
\newenvironment{acknowledgements}{{\medskip \noindent \bf Acknowledgements.}}{}
\def\url#1{\href{#1}{url\nobreakdash---\texttt{#1}}}
\begin{document} 

\begin{abstract}
We work in the category of locally definable groups in an o-minimal expansion of a field.  Eleftheriou and Peterzil conjectured that every definably generated abelian connected group $G$ in this category, is a cover of a definable group. We prove that this is the case under a natural convexity assumption inspired by the same authors, which in fact gives a necessary and sufficient condition. The proof is based on the study of the zero-dimensional compatible subgroups of $G$. Given a locally definable connected group $G$ (not necessarily definably generated), we prove that the $n$-torsion subgroup of $G$ is finite and that every zero-dimensional compatible subgroup of $G$ has finite rank. Under a convexity hypothesis we show that every zero-dimensional compatible subgroup of $G$ is finitely generated. 
\end{abstract}

\maketitle

\section{Introduction} 
The notion of o-minimal structure provides a general framework for the study of various ``tameness phenomena'' typical of semi-algebraic, sub-analytic, and similar categories of sets (see \cite{vdDries1998}). A basic example of an o-minimal structure is given by the ordered field of real numbers $(\R, <, +,\cdot)$ and the definable sets in this structure are exactly the semi-algebraic sets, namely a subset of $\R^n$ is semi-algebraic if and only if it is definable in the field of real numbers. 
There are also interesting expansions of the real field which are o-minimal, such as $(\R,<,+,\cdot,\exp)$, where $\exp$ is the real exponential function.

Any structure elementary equivalent to an o-minimal structure is o-minimal. Thus in particular every real closed field $(M,<,+,\cdot)$ is o-minimal. In order to be able to apply the usual tools of model theory (types, saturated models, etc.) it is convenient to consider, together with a given o-minimal structure, also all the structures that are elementary equivalent to it. For a general reference to model theory the reader may consult \cite{Tent2012}. 

In this paper we consider groups which are ``locally definable'' in an o-minimal expansion $M = (M,<,+,\cdot, \ldots)$ of an ordered field (necessarily real closed). 
The study of definable groups has been a main theme of research in the model theory of o-minimal structures. 
More recently various authors considered the larger category of those groups which are {\em locally definable}, namely the domain of the group and the graph of the group operation are given by countable unions of definable sets. One of the motivations for working in this larger category is that the universal cover of a definable group is locally definable \cite{Edmundo2007}. Unlike the case of definable groups, in general one cannot expect a ``tame'' behaviour for all locally definable groups. For instance every countable group is obviously locally definable. However under some natural additional assumptions, such as connectedness (in the sense of \cite{Baro2010}), the known examples of locally definable groups seem to exhibit a tame behavior (when $M$ is an expansion of the reals these groups are connected real Lie groups).  
In particular it is natural to conjecture that a locally definable abelian connected group $G$ behaves in many respects like a finite product of (non-standard) copies of $(\R,+)$ and $\R/\Z$, and in particular it is divisible and has an $n$-torsion subgroup isomorphic to $(\Z/n\Z)^s$ for some $s\leq \dim(G)$. 

This is indeed true in the definable case (with $s = \dim (G)$ if $G$ is definably compact \cite{Edmundo2004}), and remains true for those abelian connected locally definable groups $G$ which are {\em covers} of definable groups (namely there is a locally definable surjective homomorphism from $G$ to a definable group $H$ of the same dimension). 

Let us observe that a locally definable cover of a definable group is always {\em definably generated} (see \cite{Eleftheriou2012a}), namely it is generated as an abstract group by a definable subset. For an example of a locally definable group which is not definably generated one can take a non-archimedean real closed field $(M,<,+,\cdot)$ and a convex subgroup $G$ of the ordered additive group $(M,<,+)$ of the form $\bigcup_{i\in \NN} (-a_i,a_i)$ where $na_i < a_{i+1}$ for all $n\in \N$ \cite{Eleftheriou2012a,Eleftheriou2012c}. Eleftheriou and Peterzil made the following conjecture: 

\begin{conjecture}\cite{Eleftheriou2012a,Eleftheriou2012b} \label{cong-cover} Let $G$ be a definably generated abelian connected group. Then $G$ is a cover of a definable group.  \end{conjecture}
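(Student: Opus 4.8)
The plan is to reduce the statement to a question about a single zero\nobreakdash-dimensional compatible subgroup and then to prove that this subgroup is finitely generated. First I would use definable generation to write $G = \bigcup_{n} nX$ for a definable, symmetric, definably connected set $X$ containing the identity, and then pass to the local data: by o\nobreakdash-minimality there is a definable neighbourhood of the identity carrying a definable partial group operation. The natural candidate for the definable target $H$ is the definable group enveloping this local group, and the natural candidate for the covering map is the locally definable homomorphism $\pi \colon G \to H$ extending the identity near the origin. Thus the whole problem becomes the production of a definable $H$ together with such a $\pi$ of the same dimension, equivalently the exhibition of a zero\nobreakdash-dimensional compatible subgroup $\Lambda \le G$ with $G/\Lambda$ definable.

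Second, I would pin down the canonical candidate for $\Lambda$, namely the subgroup of ``periods'' arising when one attempts to factor the local operation through $G$: two elements of $G$ differing by a period are identified in any definable quotient, and $\Lambda$ is precisely the obstruction to $G$ itself being definable. The step to carry out here is that $G/\Lambda$ is definable \emph{provided} $\Lambda$ is finitely generated, since in that case one can glue the finitely many definable charts along a finite set of generators to obtain a definable group structure on the quotient. Granting this, the conjecture reduces to the single assertion that $\Lambda$ is finitely generated.

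Third, I would bring in the structural facts available. Since $\Lambda$ is a zero\nobreakdash-dimensional compatible subgroup it has finite rank, and since the $n$\nobreakdash-torsion of $G$ is finite the torsion part of $\Lambda$ is finite; modding out, I may assume $\Lambda$ torsion\nobreakdash-free of finite rank, so that $\Lambda$ embeds into $\Q^r$ with $r = \rank \Lambda$. To finish I would try to bound denominators: to find a finitely generated $\Lambda_0 \le \Lambda$ and an integer $N$ with $\Lambda \subseteq \tfrac{1}{N}\Lambda_0$, which forces $\Lambda$ finitely generated. The idea for the bound is that divisibility inside $\Lambda$ should be controlled by divisibility of elements of $G$ relative to the generating set $X$, the hope being that an element divisible by arbitrarily large integers inside $\Lambda$ would have divisors that stay ``bounded'' with respect to $X$, and that this boundedness translates into a definable constraint incompatible with the discreteness of $\Lambda$.

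The hard part is exactly this last step, and I expect it to be the genuine obstacle. A finite\nobreakdash-rank torsion\nobreakdash-free abelian group need not be finitely generated---$\Z[1/p]$ is the prototype---so finite rank alone does not suffice, and passing from finite rank to finite generation is precisely the assertion that the denominators occurring in $\Lambda$ are bounded. Controlling these denominators seems to require an archimedean\nobreakdash-type comparison between divisibility and a definable height or norm on $G$, and this is exactly the role played by the convexity hypothesis, which supplies a definable monotone gauge along which divisible directions cannot hide. Absent such a hypothesis I do not see how to exclude an unbounded\nobreakdash-denominator direction, so I expect the unconditional conjecture to hinge on a new device---a canonical definable norm on definably generated abelian groups that forces boundedness of denominators a priori---and producing such a norm is, in my view, where the real difficulty lies.
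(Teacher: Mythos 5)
The statement you set out to prove is not a theorem of the paper at all: it is Eleftheriou and Peterzil's conjecture, which the paper explicitly leaves open (Question \ref{ques}(2)) and resolves only under an additional convexity hypothesis. Theorem \ref{convex-main} shows that a definably generated connected abelian $G$ covers a definable group \emph{if and only if} $G$ is divisible and has definably bounded convex hulls, equivalently for every definable $X \subseteq G$ there is a definable $Y$ with $\Sigma_{n}X \subseteq nY$ for all $n$. Your closing diagnosis is therefore exactly right: the reduction to finite generation of a discrete compatible subgroup stalls at bounding denominators, $\Z[1/p]$ is the prototype obstruction, and the ``definable monotone gauge'' you ask for is precisely the paper's convexity condition. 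Your proposal cannot be, and honestly does not claim to be, a proof of the unconditional statement; where it stops is where the open problem genuinely lies.

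That said, two of your intermediate steps are also wrong as stated, not merely incomplete. First, it is false that $G/\Lambda$ is definable whenever $\Lambda$ is a finitely generated zero-dimensional compatible subgroup: $\Lambda = 0$ is such a subgroup and $G$ itself need not be definable. The correct reduction (Conjecture \ref{vdim}) requires a compatible $\Gamma \cong \Z^k$ of \emph{maximal} rank, and even then definability of $G/\Gamma$ is not obtained by gluing finitely many charts; in the paper's conditional argument one shows instead that the convexity condition passes to quotients (Remark \ref{con2}) and that a non-definable quotient would contain an infinite cyclic compatible subgroup (Lemma \ref{convex-alias} and Corollary \ref{cyclic-subgroup}, a type-accumulation argument using saturation, and the one place convexity is indispensable), whose preimage in $G$ has rank strictly greater than $\rank(\Gamma)$, contradicting the bound of Corollary \ref{conj-b}. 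Second, your torsion reduction fails: finiteness of each $G[n]$ (Corollary \ref{torsion}) does not make the full torsion part of $\Lambda$ finite (consider $\bigoplus_p \Z/p\Z$, which has finite $n$-torsion for every $n$), and indeed whether the torsion subgroup of $G$ is contained in a definable set is itself listed as open (Question \ref{ques}(3)); by Remark \ref{con} that question is exactly the gap between conditions (1) and (3) of Theorem \ref{convex-main}. Relatedly, the ``definable group enveloping the local group'' invoked in your first step is not available off the shelf in this category, and the paper's route to a definable quotient never passes through local-group envelopes.
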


A solution of the conjecture would reduce many questions about locally definable abelian connected groups to similar questions about definable groups, even without assuming that the group is definably generated. This depends on the fact that every locally definable group $G$ is a directed union of definably generated subgroups (which can be taken to be connected if $G$ is connected). For instance a positive solution of Conjecture \ref{cong-cover} would yield a positive solution to the following conjecture:

\begin{conjecture}\cite{Edmundo2003a,Edmundo2005,Eleftheriou2012a}
 Every locally definable connected abelian group is divisible.
 \end{conjecture}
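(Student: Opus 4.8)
The plan is to deduce this conjecture from Conjecture~\ref{cong-cover}, combined with the two facts already recalled in the introduction: that every locally definable connected group is a directed union of \emph{connected} definably generated subgroups, and that a locally definable abelian connected group which is a cover of a definable group is divisible. The argument is therefore a formal reduction, with essentially all the mathematical weight resting on Conjecture~\ref{cong-cover}.

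First I would fix a locally definable connected abelian group $G$ and write it as a directed union $G=\bigcup_i G_i$ in which each $G_i$ is a connected definably generated subgroup. Each $G_i$ is then a definably generated abelian connected group, so Conjecture~\ref{cong-cover} applies and exhibits $G_i$ as a cover of a definable group; by the second recalled fact, each $G_i$ is consequently divisible.

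To finish I would verify that divisibility is inherited by directed unions. Given $g\in G$ and $n\in\N$ with $n\geq 1$, directedness places $g$ in some $G_i$, and divisibility of $G_i$ yields $h\in G_i\subseteq G$ with $nh=g$; hence $G$ is divisible.

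The only genuine obstacle is Conjecture~\ref{cong-cover} itself. The reduction above is purely formal, whereas the cover conjecture is open in general and is established in this paper only under a convexity hypothesis, so the present argument delivers divisibility precisely under that hypothesis. Two subordinate points merit attention. One must confirm that the pieces $G_i$ can be taken connected, so that Conjecture~\ref{cong-cover} is applicable --- this is the parenthetical strengthening of the directed-union decomposition noted above. And one should remember what the divisibility of a cover conceals: for $\pi\colon G_i\to H$ with $H$ definable, connected and abelian, the group $H$ is divisible by the definable theory, and one lifts divisions through $\pi$ modulo the discrete kernel $\ker\pi$; absorbing this kernel contribution, rather than the directed-union bookkeeping, is where the real content lies.
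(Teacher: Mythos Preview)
The statement is a conjecture, not a theorem, and the paper does not prove it; what the paper offers is precisely the reduction you describe, namely that a positive solution to Conjecture~\ref{cong-cover}, together with the decomposition of $G$ as a directed union of connected definably generated subgroups and the divisibility of covers of definable groups, yields divisibility of $G$. Your proposal spells out this implication correctly and follows the same route as the paper's informal argument in the introduction (with the lifting step later made precise in Proposition~\ref{compatible}).
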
 
    
In \cite{Eleftheriou2012b} Eleftheriou and Peterzil proved Conjecture \ref{cong-cover} for subgroups of $(M^n,+)$. They also proved that Conjecture \ref{cong-cover} is equivalent to the following: 

\begin{conjecture}\cite[Conjecture B] {Eleftheriou2012b} \label{vdim} Let $G$ be a definably generated abelian connected group. 
\begin{enumerate}
\item There is a maximal $k\in \N$ such that $G$ contains a compatible subgroup $\Gamma$ isomorphic to $\Z^k$. 
\item If $G$ is not definable, $k\geq 1$. 
\end{enumerate}
\end{conjecture}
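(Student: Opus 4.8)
The plan is to treat the two clauses separately, using the two main structural results on zero-dimensional compatible subgroups as the engine: the (unconditional) finite-rank theorem for clause (1), and the finitely-generated theorem (valid under the convexity hypothesis) for clause (2). Throughout, I fix a definable symmetric set $X$ with $0\in X$ generating $G$, so that $G=\bigcup_{n\in\N}X_n$ where $X_n=\underbrace{X+\dots+X}_{n}$, each $X_n$ definable; note that $G$ is definable precisely when this increasing union stabilises, i.e. $X_n=X_{n+1}$ for some $n$.

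For clause (1), observe first that any subgroup $\Gamma\cong\Z^k$ is discrete, hence zero-dimensional, so only the compatible such $\Gamma$ are relevant. The set $S=\{k\in\N : G \text{ contains a compatible subgroup isomorphic to }\Z^k\}$ contains $0$, so it suffices to bound $S$ from above. This is exactly what the finite-rank theorem provides: since $\rank(\Z^k)=k$, every such $\Gamma$ has $k=\rank(\Gamma)<\infty$, and I expect its proof to yield the uniform bound $\rank(\Gamma)\le\dim G$ — sharp already for $G=\widetilde H$, the universal cover of a definably compact $H$ with $\dim H=\dim G$, where the kernel is a compatible $\Z^{\dim G}$. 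Granting the uniform bound, $S\subseteq\{0,1,\dots,\dim G\}$ is a bounded nonempty set of naturals and therefore has a maximum, which is the required $k$.

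For clause (2) I would argue by contraposition: assuming $G$ contains no compatible subgroup isomorphic to $\Z$ (i.e. $k=0$), I show that $G$ is definable. Under the convexity hypothesis the finitely-generated theorem applies, so every zero-dimensional compatible subgroup is finitely generated; combined with $k=0$ this forces every zero-dimensional compatible subgroup of $G$ to be finite. The goal is then to derive definability from the absence of infinite discrete compatible subgroups. Suppose $G$ were not definable, so the union $G=\bigcup_n X_n$ is cofinally strictly increasing. I would extract from this unboundedness a definable unbounded one-parameter subgroup (equivalently, a definable curve $\sigma$ leaving every $X_n$) and then discretise it: the convexity hypothesis is what prevents $\sigma$ from being a bounded or ``winding'' direction with dense image, and instead lets one select along it an element $g$ of infinite order with $\langle g\rangle$ discrete and compatible, i.e. a compatible copy of $\Z$. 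This contradicts $k=0$, so $G$ is definable. (Unconditionally, clause (2) is equivalent to Conjecture~\ref{cong-cover} via the Eleftheriou--Peterzil reduction, so I would not expect to remove convexity here.)

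The main obstacle is the construction at the heart of clause (2): turning the mere non-stabilisation of $\bigcup_n X_n$ into an honest \emph{discrete} compatible subgroup isomorphic to $\Z$, rather than into a bounded or dense one-parameter subgroup — an ``irrational rotation'' — which is neither discrete nor compatible. The delicate point is simultaneously guaranteeing three properties of the generator $g$: infinite order, discreteness of $\langle g\rangle$ (equivalently, $\langle g\rangle\cap X_n$ finite for all $n$), and compatibility. It is precisely here that the convexity hypothesis and the finitely-generated theorem must be brought to bear, the latter to exclude the infinite-rank or non-finitely-generated pathologies among the zero-dimensional compatible subgroups that one risks encountering in the process.
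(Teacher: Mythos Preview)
Your treatment of clause~(1) is correct and matches the paper exactly: any compatible $\Gamma\cong\Z^k$ is zero-dimensional, so Corollary~\ref{conj-b} gives $k=\rank(\Gamma)\le\dim(G)$, and the maximum exists.

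For clause~(2), however, there is a genuine gap. The detour through the finitely-generated theorem is both unnecessary and mismatched in hypotheses (that result, Corollary~\ref{fgdisc}, assumes convexity on the \emph{universal cover}, not on $G$), and the step ``every zero-dimensional compatible subgroup is finite, so now construct a compatible $\Z$'' is circular: the construction itself is the whole problem. More seriously, your proposed mechanism cannot work as stated. A ``definable curve $\sigma$ leaving every $X_n$'' is a contradiction in terms, since the image of a definable map is definable and hence contained in some $X_n$; and there is no general theory of one-parameter subgroups for locally definable groups that would let you produce an unbounded direction and then ``discretise'' it. You correctly flag this as the main obstacle, but you do not supply the missing idea.

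The paper's argument is quite different and does not pass through curves or one-parameter subgroups at all. Working in an $\aleph_1$-saturated model, one uses the convexity condition~(1) of Theorem~\ref{convex-main} directly: it provides a definable $Y$ with $\Sigma_n X\subseteq nY$ for all $n$, and if $G$ is not definable then $Y'=2Y$ satisfies $nY'\not\subseteq\Sigma_n X$ for every $n$ (Corollary~\ref{cyclic-subgroup}). The core is then Lemma~\ref{convex-alias}: pick $a_n\in Y'$ with $na_n\notin\Sigma_n X$, take an accumulation point $q$ of the types $\mathrm{tp}(a_n/A)$ in the compact type space, and let $b$ realise $q$. A short combinatorial claim (if $ny\notin\Sigma_n X$ and $n\ge 4km$ then $2my\notin\Sigma_m X$) transfers the escaping behaviour of the $a_n$ to $b$, yielding $2mb\notin\Sigma_m X$ for all $m$; hence $\Z\,2b$ meets each $\Sigma_m X$ in a finite set and is an infinite cyclic compatible subgroup. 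This type-space compactness argument, together with the explicit arithmetic use of the convexity witness $Y$, is the key idea your proposal is missing; once one has it, the proof of $(1)\Rightarrow(2)$ in Theorem~\ref{convex-main} finishes by taking $\Gamma$ of maximal rank and applying the lemma to $G/\Gamma$.
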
 

If Conjecture \ref{vdim} holds and we take $\Gamma$ as in the conjecture, then $G/\Gamma$ is definable, and $G$ covers it. 

We are now ready to discuss the results of this paper. In Section \ref{rank} we prove part (1) of Conjecture \ref{vdim}. In Section \ref{sec-convexity} we prove part (2) under a convexity assumption suggested by the work of Eleftheriou and Peterzil.  As a result of this analysis we show:

\begin{Theorem}[see Theorem \ref{convex-main}]
A definably generated connected abelian group $G$ is a cover of a definable group if and only if for every definable set $X\subseteq G$ there is a definable set $Y\subseteq G$ which contains the convex hull of $X$ (in the sense of Definition \ref{convex}).
\end{Theorem}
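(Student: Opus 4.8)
The statement is an equivalence, and I would prove the two implications separately, treating the direction from the convexity condition to the covering property as the substantial one. For this direction the plan is to isolate, using the results already at hand, a single key lemma: \emph{a definably generated connected abelian group satisfying the convexity condition and containing no compatible subgroup isomorphic to $\Z$ is definable.} Granting this, the argument is short. By Section \ref{rank} (part (1) of Conjecture \ref{vdim}) we may fix a maximal compatible subgroup $\Gamma\cong\Z^k$ of $G$. The quotient $G/\Gamma$ is again definably generated, connected and abelian, it is locally definable because $\Gamma$ is compatible, and it contains no nontrivial compatible copy of $\Z$ — for such a subgroup would pull back to a compatible $\Z^{k+1}$ in $G$, contradicting maximality of $k$. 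Thus, once one checks that the convexity condition descends from $G$ to $G/\Gamma$, the key lemma applies to $G/\Gamma$ and shows it is definable. Since $\Gamma$ is zero-dimensional, the quotient map $G\to G/\Gamma$ is then a locally definable surjection onto a definable group of the same dimension, that is, a cover.

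It remains to describe how I would attack the key lemma, which amounts to part (2) of Conjecture \ref{vdim} under convexity and which I would prove by contraposition: if $G$ is not definable then it contains a compatible $\Z$. Fix a definable symmetric generating set $U\ni 0$, and use the convexity hypothesis to enlarge $U$ to a definable set containing its convex hull, so that the generating set is convex up to a bounded correction. Since $G$ is not definable, the increasing union $G=\bigcup_n nU$ does not stabilise, so the convex sets $nU$ grow without being absorbed into any single definable subset of $G$. The plan is to extract from this unbounded convex growth a single direction, namely an element $a\in G$ whose multiples $na$ escape every definable subset of $G$, while the fact that every definable set has its convex hull contained in a definable set forces $\Z a$ to meet each definable subset in a finite, hence definable, set. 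This should simultaneously deliver that $\Z a$ is discrete and compatible, providing the required copy of $\Z$.

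For the converse I assume $G$ covers a definable group $H$ through a locally definable surjection $\pi\colon G\to H$ with discrete compatible kernel $\Gamma$, and I take a definable $X\subseteq G$. Writing $G$ as a countable increasing union of definable sets, a definable subset is always contained in a single piece; I would choose these pieces to be the convex hulls of the $\Gamma$-translates $D+\{\gamma\in\Gamma:\ |\gamma|\le m\}$ of a definable fundamental domain $D$ for the cover, so that each piece is definable and convex. Then $X$ lies in one such piece $E_m$, its convex hull is contained in $E_m$ by convexity of $E_m$, and $Y=E_m$ is the required definable set. The two inputs here — that a definable subset lies in a single piece of a countable definable exhaustion, and that a cover of a definable group admits a definable fundamental domain (by o-minimal definable choice) — are standard.

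The main obstacle is the extraction step in the key lemma: I must use the convexity hypothesis to guarantee that the element $a$ produced from the unbounded growth generates a subgroup that is genuinely discrete and compatible, ruling out both that $\Z a$ is dense and that its multiples $na$ fold back into a bounded region instead of escaping every definable set; controlling this is precisely where convexity, rather than mere non-definability, is indispensable, and I expect it to interact with the finite-generation of zero-dimensional compatible subgroups. By comparison I regard the verification that convexity descends to $G/\Gamma$, and that the fundamental domain in the converse can be taken definable with definable convex hull, as secondary technical points.
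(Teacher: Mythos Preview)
Your forward direction has the right architecture --- take a maximal compatible $\Gamma\cong\Z^k$, pass to $G/\Gamma$, and show the quotient is definable by producing a compatible copy of $\Z$ if it is not --- and this is exactly what the paper does. But two points you flag as secondary are in fact where the work lies. The descent of the convexity condition to $G/\Gamma$ is \emph{not} routine: if $k\bar x=\bar a_1+\cdots+\bar a_k$ in the quotient, any lift satisfies $kx=a_1+\cdots+a_k+\gamma$ for some $\gamma\in\Gamma$, and there is no reason $x$ lands in (a translate of) the convex hull of the $a_i$. The paper circumvents this by introducing the weaker auxiliary condition $\forall n\ (\Sigma_nX\subseteq nY)$, which does pass to quotients because homomorphisms commute with sums and multiples, and it is this condition that drives the argument. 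For the extraction of the element $a$ in your key lemma you offer only a plan; the paper's mechanism is a compactness argument in the Stone space of types over a countable parameter set (hence requiring $\aleph_1$-saturation): one takes witnesses $a_n\in Y$ with $na_n\notin\Sigma_nX$, lets $b$ realise an accumulation point of the types $\operatorname{tp}(a_n)$, and shows by an explicit arithmetic estimate that $2mb\notin\Sigma_mX$ for every $m$, so that $\Z b$ is compatible.

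Your converse direction has a genuine circularity. You propose to exhaust $G$ by sets $E_m=\rh(D+F_m)$ with $D$ a definable fundamental domain and $F_m\subseteq\Gamma$ finite, and you assert that each $E_m$ is definable. But the definability of the convex hull of a definable set is precisely the conclusion you are trying to establish; nothing in the hypothesis ``$G$ covers a definable group'' tells you \emph{a priori} that such convex hulls are definable, so the argument assumes what it sets out to prove. The paper's route is entirely different and does not attempt to build convex definable pieces inside $G$ directly: working in a saturated model, one uses that $G^{00}$ exists and that $G/G^{00}\cong\R^k\times(\R/\Z)^l$ in the logic topology, observes that any compact subset of this Lie group lies inside a compact \emph{convex} subset, and pulls back --- the preimage under $\pi\colon G\to G/G^{00}$ of a convex set is convex because $G^{00}$ is divisible, and the preimage of a compact set is contained in a definable set by the basic properties of the logic topology.
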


We also consider locally definable connected abelian groups which are not necessarily definably generated. 
Let $G$ be a locally definable abelian connected group. We show that $G$
has a finite $n$-torsion subgroup~$G[n]$ (Corollary \ref{torsion}) and that every zero-dimensional compatible subgroup $\Gamma$ of $G$ has finite rank bounded by the dimension of $G$. 
Under an additional convexity assumption we show that $\Gamma$ is finitely generated (Corollary~\ref{fgdisc}). Since the fundamental group $\pi_1(G)$ of $G$ is isomorphic to a zero-dimensional compatible subgroup of the universal cover $U$ of $G$, the above results can be applied to bound the rank of $\pi_1(G)$ (Corollary \ref{pi-one}) or to prove (under the appropriate convexity hypothesis) that $\pi_1(G)$ is finitely generated (Theorem \ref{fingen}). The actual proof however goes the other way around: first we give bounds on the fundamental groups, and then we deduce bounds on the zero-dimensional compatible subgroups. 

The problem of the finiteness of the $n$-torsion subgroup
 was considered in the unpublished note \cite{Edmundo2003a}
by the second author, but the proof of finiteness of $G[n]$ contained
therein had some gaps (and assumed divisibility). Among the motivations of this paper there
is a desire to provide a correct proof. 

In Section \ref{examples} we give an example of a locally definable
H-space whose fundamental group is $\Q$. This may explain some of the
difficulties in showing that $\pi_1(G)$ is finitely generated using only
homotopy information,  thus giving some indirect justification for the introduction of the convexity assumption. In the same section we give an example of a locally definable connected group which has a generic definable subset, but does not cover a definable group. This shows that the results of \cite{Eleftheriou2012a} do not extend to non-commutative groups. 

The reader may gain some insight on the problems considered in this paper by first taking a look at Question \ref{ques} at the end of the paper. 

Sections \ref{ld-groups} and \ref{covers} contain some definitions and background results. 

\begin{acknowledgements} The main results of this paper have been presented on Feb. 2, 2012 at the Logic Seminar of the Mathematical Institute in Oxford. The first author thanks Jonathan Pila for the kind invitation. We also thank Margarita Otero, Pantelis Eleftheriou, Kobi Peterzil and the anonymous referee for their comments. \end{acknowledgements}

\section{Locally definable groups} \label{ld-groups}

We work in an o-minimal expansion~$M$ of a field. A subset of $M^n$ 
is definable if it is definable in~$M$ with parameters. 
There are several different definitions and variants of locally definable sets and groups in the literature.  
The definition is simpler if $M$ is assumed to be $\aleph_1$-saturated, so let us momentarily assume that this is the case. 
For us a~\textit{locally definable set} is a countable union $\bigcup_{i\in \N} X_i$ 
of definable subsets of $M^n$ for some fixed $n$, and a \textit{locally definable function} $f:X \to Y$ between locally definable sets $X = \bigcup_{i\in \N}X_i$ and $Y = \bigcup_{j\in \N} Y_j$ is a function whose restriction to each definable subset of its domain $X$ is definable. Given a locally definable set $X = \bigcup_{i\in \N}X_i$ we can always assume that the union is {\em directed}, namely for every $i,j\in \N$ there is $k\in \N$ with $X_i\cup X_j \subseteq X_k$ (if not, we reduce to the directed case considering the unions of the finite subfamilies). By $\aleph_1$-saturation it then follows that every definable subset of $\bigcup_{i\in \N} X_i$ is contained in some $X_i$ (or in a finite union of the $X_i$'s if the union is not directed). 
In the non-saturated case the corresponding property fails, unless we make the convention that a locally definable set $X$ comes equipped with a given presentation as a directed countable union $\bigcup_{i\in \N}X_i$ of definable sets and that by a {\em definable subset} of $X$ we really mean a definable subset of some $X_i$. Consistently with this convention we stipulate that $f: X \to Y$ is locally definable if and only if for each $i\in \N$ the restriction of $f$ to $X_i$ has image contained in some $Y_j$ and it is definable as a function from $X_i$ to $Y_j$. For instance if $M=\R$ we have $M = \bigcup_{n\in \N} [-n,n]$ but the identity map $f: \R \to \bigcup_{n\in \N} [-n,n]$ is not locally definable since the image through $f$ of the definable set $\R$ is not contained in $[-n,n]$ for any $n\in \N$. 
A~\textit{locally definable group} is a locally definable set $G= \bigcup_{i\in \N} X_i$ equipped with a locally definable group operation $\mu: G\times G \to G$ and group inverse, where we endow $G\times G$ with the presentation $G\times G = \bigcup_{i,j\in \N}X_i \times X_j$ as a locally definable set. 

Given a definable set $X$ in $M$ and an elementary extension $M'\succ M$, we denote by $X(M')$ the set defined in $M'$ by the same formula defining $X$ (so in particular $X = X(M)$). It is easy to see that $X(M')$ does not depend on the choice of the definining formula for $X$. Similarly, given a locally definable set $X = \bigcup_{i\in \N}X_i$ we denote by $X(M')$ the locally definable set $\bigcup_{n\in \N}X(M')$. In the non-saturated case this depends on the presentation of $X$ as a countable union of definable sets. 

Every definable group has a unique group topology making it into a
definable manifold over~$M$ \cite{Pillay1988}. This result
was generalized to locally definable groups in \cite[Proposition~2.2]{Peterzil2000a}, \cite[Theorem~2.3]{Edmundo2006} and \cite[Theorem 3.9]{Baro2010}. By the latter reference every locally definable group admits a group topology making it into a~\textit{locally definable space} with a countable atlas. We call the resulting group topology
the~\textit{t-topology}. When $M$ is an o-minimal expansion of the reals we get a real Lie group. 
For the definition and a recent systematic treatment of
locally definable spaces  we refer to~\cite{Baro2010}. Roughly speaking locally definable spaces stand to locally definable sets as abstract manifolds (given by atlases) stand to submanifolds of $\R^n$. 
In~\cite{Baro2010} a well behaved subclass of the locally definable spaces is
discussed: the paracompact ones. They admit the following
characterization: a locally definable space~$X$ is~\textit{paracompact} 
if the closure of each definable subspace of~$X$ is definable~\cite[Fact 2.7]{Baro2010}. 
A locally definable group~$G$, with the t-topology, is always 
paracompact \cite[Theorem 3.10]{Baro2010}. Thus we can make free use of the results of~\cite{Baro2010} on locally
definable paracompact spaces when dealing with locally definable groups. 
Finally, let us recall the following definition: a subset~$X$ of a locally
definable group (or space)~$G$ is~\textit{compatible} if its intersection with
any definable subset of~$G$ is definable.
By~\cite[Lemma~3.3 and Theorem~4.2]{Edmundo2006} a normal
subgroup~$A\vartriangleleft B$ of a locally definable group~$B$ is
compatible if and only if it is the kernel of a locally definable
surjective homomorphism~$f\colon B \to C$ between locally definable
groups. So the quotients $B/A$, with~$A$ compatible in~$B$, exist in the
category of locally definable groups and they are unique up to locally
definable isomorphisms. Let us also recall that a locally definable space is~\textit{connected}
if it is not the union of two non-trivial clopen compatible subsets \cite{Baro2010}. This is equivalent to the condition that
every two points can be joined by a definable path. It is easy to see that a locally definable group is connected (in the t-topology) if and only if it has no compatible subgroups of index $\leq \aleph_0$.

\section{Covers}
\label{covers}
We recall a few facts from the theory of covering spaces in the locally definable category, as developed in \cite{Edmundo2005,Edmundo2007} (see \cite{Edmundo2004} for the definable case). Given two locally definable connected groups $U$ and $G$, a surjective locally definable homomorphism $f: U \to G$ is a {\em covering homomorphism} if and only if its kernel is zero-dimensional, or equivalently if $\dim(U)=\dim(G)$ \cite[Theorem 3.6]{Edmundo2005}. Given such a covering $f:U\to G$, there is an induced injective homomorphism $f_*: \pi_1(U)\to \pi_1(G)$ on the o-minimal fundamental groups (see~\cite[Proposition 4.6]{Edmundo2005} or \cite[Proposition~6.12]{Baro2010}). We say that $U$ is {\em simply connected} if $\pi_1(U) = 0$. 
By \cite[Prop. 3.4 and 3.12]{Edmundo2005} we have: $$\ker(f) \cong \pi_1(G)/f_*(\pi_1(U)).$$
In particular if $U$ is the {\em universal cover} of $G$ (as in \cite{Edmundo2007}), then $\pi_1(U) = 0$ and $\ker(f) \cong \pi_1(G)$. So $\pi_1(G)$ is isomorphic to a zero-dimensional subgroup of the universal cover of $G$. Another way of saying the same thing is $\Lambda \cong \pi_1(U/\Lambda)$, provided $\pi_1(U) = 0$ and $\Lambda$ is a zero-dimensional compatible normal subgroup of $U$. More generally if $\Gamma$ is a zero-dimensional compatible normal subgroup of a connected locally definable group $G$, then $\Gamma$ is a quotient of $\pi_1(G/\Gamma)$ (consider the covering $G\to G/\Gamma$). So there is a strong connection between compatible zero-dimensional subgroups and fundamental groups. 

\section{Discrete compatible subgroups} 
\label{rank} 

In this section we prove the finiteness of the the $n$-torsion subgroup $G[n]$ of a locally definable connected group $G$. We also prove the finiteness of the rank of any zero-dimensional compatible subgroup of $G$. We will make use of homological techniques.
Homology and homotopy in the locally definable category has been studied
by various authors: see~\cite{Baro2010} for some bibliography and recent
results.  Given a locally definable space~$X$, we let  $S_*(X)$ denote 
the o-minimal singular chain complex of~$X$, and $H_*(X;R)$ the corresponding
graded homology group with coefficients in~$R$.

\bt \label{hone} Let $G$ be a locally definable abelian connected group.
Then for any perfect field~$R$, the $R$-vector space~$H_1(G;R)$ has finite
dimension~$\leq \dim(G)$. In particular:
\begin{enumerate}
\item the $\Q$-vector space $H_1(G;\Q)$ has dimension~$\leq \dim(G)$,
\item the $\Z/p\Z$-vector space~$H_1(G;\Z/p\Z)$ has dimension~$\leq \dim(G)$.
\end{enumerate}
\et

The proof is based on the theory of Hopf-algebras (see \cite{Dold1995}). 
The same tool has been used in~\cite{Edmundo2004} for the study of definable groups but working in cohomology rather than homology. We have chosen to work in homology since the homological Kunneth formula does not require the assumption that the homology is of finite type (unlike the cohomological version). Note that in the locally definable category the finite type assumption is not granted {\em a priori}. Indeed it is easy to construct locally definable spaces which do not satisfy it. The Kunneth formula in homology is used in the proof of the following fact. 

\begin{fact} \label{hopf}
Let $R$ be a field and let $G$ be a locally definable abelian connected group. 
The group multiplication~$\mu\colon G \times G \to G$ induces on~$H_*(G;R)$ the structure of a commutative (in the graded sense) $R$-algebra, which is in fact a connected Hopf-algebra over $R$ in the sense of \cite{Dold1995}.
\end{fact}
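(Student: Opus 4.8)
The plan is to reproduce, in the locally definable category, the classical construction of the Pontryagin--Hopf algebra structure on the homology of a topological group (or, more generally, an H-space). Since $R$ is a field, the o-minimal homological Künneth theorem supplies a natural cross-product isomorphism
$$\times\colon H_*(G;R)\otimes_R H_*(G;R) \xrightarrow{\ \sim\ } H_*(G\times G;R),$$
with no finite-type hypothesis on $G$ (this is precisely the reason for working in homology rather than cohomology). Every structure map will be obtained by applying the functor $H_*(-;R)$ to a locally definable map and composing with $\times$ or its inverse, so that each Hopf-algebra axiom will follow from a corresponding identity between locally definable maps together with the naturality of the cross product.

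Concretely, I would define the product as the composite $H_*(G)\otimes H_*(G)\xrightarrow{\times} H_*(G\times G)\xrightarrow{\mu_*} H_*(G)$ and the coproduct as $H_*(G)\xrightarrow{\Delta_*} H_*(G\times G)\xrightarrow{\times^{-1}} H_*(G)\otimes H_*(G)$, where $\Delta\colon G\to G\times G$ is the locally definable diagonal. The unit $\eta$ and counit $\epsilon$ are induced respectively by the inclusion of the identity element $e\colon \{*\}\to G$ and the projection $G\to\{*\}$, and the antipode $S$ is induced by the inverse map $\iota\colon G\to G$, $x\mapsto x^{-1}$. Connectedness of $G$ gives $H_0(G;R)\cong R$ (any two points are joined by a definable path), which is exactly the assertion that the resulting Hopf algebra is connected in the sense of~\cite{Dold1995}.

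The bulk of the verification is formal. Associativity and the unit law for the product are the images under $H_*$ of the associativity and unit diagrams for $\mu$, combined with naturality of $\times$; dually, coassociativity and counitality come from the evident identities for $\Delta$. Graded commutativity is where abelianness enters: since $G$ is abelian we have $\mu\circ T=\mu$ for the swap $T\colon G\times G\to G\times G$, and under the cross product $T_*$ corresponds to the graded (Koszul-sign) transposition on $H_*(G)\otimes H_*(G)$, so the product is commutative in the graded sense. The bialgebra compatibility---that the coproduct is an algebra homomorphism---is the Eckmann--Hilton identity $\Delta\circ\mu=(\mu\times\mu)\circ(\mathrm{id}\times T\times\mathrm{id})\circ(\Delta\times\Delta)$ read on homology, again using naturality and multiplicativity of $\times$ on products. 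Finally the antipode axiom follows because the composite $G\xrightarrow{\Delta} G\times G\xrightarrow{\iota\times\mathrm{id}} G\times G\xrightarrow{\mu}G$ is the constant map at $e$, whence on homology it factors through $H_*(\{*\})=R$ as $\eta\circ\epsilon$.

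The one genuinely nontrivial input, and the step I expect to be the main obstacle, is the Künneth isomorphism together with the naturality and multiplicativity of the cross product in the o-minimal singular homology of locally definable spaces that need \emph{not} be of finite type. Granting an o-minimal Eilenberg--Zilber equivalence $S_*(X\times Y)\simeq S_*(X)\otimes S_*(Y)$, the algebraic Künneth theorem over the field $R$---where every module is flat, so that no Tor terms arise and no finiteness is required---yields the isomorphism above and its naturality in $X$ and $Y$. Once this is secured, every Hopf-algebra axiom is obtained by transporting a commuting diagram of locally definable maps through the functor $H_*(-;R)$, and the remainder is a routine diagram chase.
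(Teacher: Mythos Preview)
Your proposal is correct and follows essentially the same route as the paper: the key nontrivial step is the Eilenberg--Zilber chain homotopy equivalence $S_*(X\times Y)\simeq S_*(X)\otimes S_*(Y)$ in the locally definable category (the paper simply asserts that the definable proof from~\cite[Proposition~3.2]{Edmundo2004} goes through verbatim), after which the K\"unneth isomorphism over a field and the Hopf-algebra axioms follow exactly as in the classical case. The paper's own proof is much terser---it just cites the relevant sections of~\cite{Dold1995} for the Pontryagin ring and Hopf-algebra structure rather than unpacking the axioms as you do---and it does not mention an antipode (Dold's notion of Hopf algebra does not require one), but your more detailed verification is harmless and the approaches are otherwise identical.
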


\bp Given two locally definable spaces $X$ and $Y$ there is a natural chain homotopy equivalence
\[
S_*(X\times Y)
\rightleftarrows S_*(X)\otimes S_*(Y)
\]
This has been verified in the
definable case in \cite[Proposition~3.2]{Edmundo2004} and the  proof in the
locally definable case is identical. The Kunneth formula
\[
H_*(X\times Y;R)
\cong H_*(X;R) \otimes H_*(Y;R)
\]
then follows (see
\cite[ch.~\textsc{vi} Theorem~9.13]{Dold1995}). Identifying $H_*(G\times G;R)$ with $H_*(G;R) \otimes
H_*(G;R)$, the group multiplication $\mu\colon G \times G \to G$ induces
a multiplication $\mu_*\colon H_*(G;R) \otimes H_*(G;R) \to H_*(G;R)$
in homology making $H_*(G;R)$ into a Pontryagin's ring (see
\cite[ch.~\textsc{vii} Section~3]{Dold1995}). The diagonal map
$\Delta\colon  G \to G \times G$, defined by $\Delta(x) = (x,x)$, induces
a natural {\em co-multiplication} (or {\em diagonal}) $\psi \colon  H_*(G;R)
\to H_*(G;R) \otimes H_*(G;R)$ making the Pontryagin's ring $H_*(G;R)$
into a connected commutative Hopf-algebra (see
\cite[ch.~\textsc{vii}~10.10]{Dold1995}).
\ep

We now proceed to prove Theorem~\ref{hone}. The proof is easy when~$R=\Q$.
In fact, over a field of
characteristic zero, any connected commutative Hopf-algebra is free by the Hopf-Leray
theorem (see \cite[ch.~\textsc{vii} Proposition~10.16]{Dold1995}). 
For a contradiction take $\dim(G)+1$ elements in~$H_1(G;\Q)$ independent
over~$\Q$.
Since the Hopf $\Q$-algebra~$H_*(G;\Q)$ is free, their Pontryagin product
is a non-zero element of~$H_{\dim(G)+1}(G,\Q)$. This is absurd since $H_n(G,\Q)$
vanishes if~$n>\dim(G)$. In fact, the homology of a locally definable space is the direct limit of the homologies of its definable subsets by~\cite[Theorem~3.1]{Baro2010}, and a definable set of dimension~$d$ has trivial $H_n$ for~$n>d$ (see for instance~\cite[Lemma 3.1]{Edmundo2004}). To prove the theorem in characteristic different from zero we must employ a slightly more involved argument.

\bp[Proof of Theorem~\ref{hone}]
Let $H= H_*(G;R)$. By Fact~\ref{hopf}, $H$ is a connected commutative Hopf algebra. In
general the comultiplication sends a homology class $x\in H_q$ with
$q\geq 1$ to an element $\psi(x)$ of the form  $x \otimes 1 + 1 \otimes x
+ r$ where $r$ has the form $\Sigma_i a_i \otimes b_i$ with $|a_i| \geq 1$
and $|b_i| \geq 1$ (\cite[(10.10), p. 229]{Dold1995} or \cite[ch.~\textsc{iv}, \S
2, p. 173]{Mimura1978}). An element $x$ is primitive if $r = 0$, namely
$\psi(x) = x \otimes 1 + 1 \otimes x$.  Clearly each element of $H_1 :=
H_1(G;R)$ is primitive, since $\psi$ must preserve the degrees. Let $d= \dim(G)$. We must prove 
that $H_1$ has dimension~$\leq d$ over~$R$. For a contradiction suppose
that there are $d+1$ $R$-linearly independent elements $x_1, \ldots, x_{d+1}$
of~$H_1$.
Let $P \subseteq H$ be the subalgebra generated by $x_1, \ldots, x_{d+1}$. So in particular $P$ is generated by primitive elements of $H$. If $x$ is primitive, $\psi(x^h) = \Sigma_{i=0}^{h} { h \choose i} x^i \otimes x^{h-i}$. It follows that $\psi (P) \subseteq P \otimes P$, so $P$ is again a Hopf algebra, with comultiplication given by the restriction of $\psi$ to $P$. 
Note that since $P$ is generated by primitive elements, the
comultiplication is associative and commutative (see \cite[ch.~\textsc{vii}, Thm.
1.1, p. 366]{Mimura1978}), but we will not need this fact. What we need
instead is that $P$ is a connected Hopf algebra of finite type over a
perfect field with an associative and commutative {\em multiplication}. So
by the Hopf-Borel Theorem (see \cite[(8.11), p. 154]{Whitehead1978} or
\cite[Theorem 1.3, p. 366]{Mimura1978} $P$ is isomorphic, as a graded
algebra, to a tensor product of algebras $\bigotimes_{i=1}^n B_i$, where
each $B_i$ is monogenic, namely it is generated by a single element $y_i$
with $|y_i| > 0$. The isomorphism need not preserve the comultiplication,
so it is not in general an isomorphism of Hopf algebras. In any case the
product $y_1\cdot \ldots \cdot y_n$ is a non-zero element of
$\bigotimes_{i=1}^n B_i$ of degree $m = |y_1| + \ldots + |y_n| \geq n$. Since the isomorphism $P
\cong \bigotimes_ i B_i$ preserves the degrees, we obtain $P_m \neq 0$, so also
$H_m(G;R) \neq 0$. This implies $m \leq \dim(G)$ and since $n\leq m$, we obtain $n\leq \dim(G)$. 
On the other hand since each $B_i$ is monogenic ($i=1, \ldots, n$), the subspace of $B_i$ consisting of the elements of degree $1$ has dimension $\leq 1$, and therefore the subspace of $\bigotimes_{i=1}^n B_i$ consisting of the elements of degree $1$ has dimension $\leq n$. The same then holds for the isomorphic algebra $P$, which is a contradiction since $P$ has $d+1 \geq n$ independent elements of degree~$1$.
\ep 

Let us recall that
the {\em rank} of an abelian group~$\Gamma$ is the cardinality of a maximal
linearly independent (over~$\mathbb{Z}$) subset of~$\Gamma$; it coincides with
the dimension of~$\Gamma\otimes \Q$ as a $\Q$-vector space.

\bc \label{pi-one}
Let $G$ be a locally definable abelian connected group and let $R$ be a perfect field. Then the dimension of $\pi_1(G) \otimes R$ as a vector space over $R$ is $\leq \dim(G)$. In particular: 
\begin{enumerate}
 \item $\pi_1(G)$ has rank $\leq \dim(G)$.  
\item $\pi_1(G) \otimes \Z/p\Z$ has dimension $\leq \dim(G)$ over $\Z/p\Z$ for each prime $p$. 
\end{enumerate}
\ec 
\bp
Since $G$ is a locally definable group, $\pi_1(G)$ is abelian. Hence, by
the locally definable version of the Hurewicz theorem~\cite[Theorem~6.15]{Baro2010}, $\pi_1(G)$ is isomorphic to~$H_1(G;\Z)$. It follows that
$\pi_1(G) \otimes R$ is isomorphic to~$H_1(G;R)$. The statement now follows
from Theorem~\ref{hone}.
\ep 

Note that the above corollary does not imply that $\pi_1(G)$ is finitely generated, for instance it does not rule out the possibility that $\pi_1(G) \cong \Q$. 
The following result proves part~(1) of Conjecture~B
in~\cite{Eleftheriou2012b}. 

\bc\label{conj-b}
Let $G$ be a locally definable abelian connected group. Let $\Gamma <
G$ be a zero-dimensional compatible subgroup of~$G$. Then for every perfect field $R$ the dimension of $\Gamma \otimes R$ as a vector space over $R$ is~$\leq \dim(G)$. In particular: 
\begin{enumerate}
\item $\Gamma$ has rank $\leq \dim(G)$. 
\item $\Gamma \otimes \Z/p\Z$ has dimension $\leq \dim(G)$ over $\Z/p\Z$ for each prime $p$.  
\end{enumerate}
\ec
\bp
Let $\pi\colon U \to G$ be the universal cover of~$G$ and let $\Lambda =
\pi^{-1}(\Gamma) < U$. Then $\Lambda$ is a compatible zero-dimensional
subgroup of~$U$. By Corollary~\ref{pi-one}, $$\dim_R(\pi_1(U/\Lambda) \otimes R) \leq\dim(U/\Lambda) = \dim(G)$$
where the last equality follows from the fact that $\Lambda$ is zero-dimensional. 
Now since $U$ is simply connected, $\Lambda \cong \pi_1(U/\Lambda)$
by the theory of covers, and since $\Gamma$ is a homomorphic image
of~$\Lambda$, it follows that $\dim_R(\Gamma \otimes R) \leq \dim_R(\Lambda \otimes R) \leq \dim(G)$. 
\ep

We are now ready to prove the finiteness of the $n$-torsion subgroup $G[n]$ of a locally definable connected group $G$. A natural
approach would to consider the homomorphism $n\colon G \to G$, $x\mapsto
nx$. This has been proved to be effective in the definable case
\cite{Edmundo2004} and can still be used in the locally definable case to
show that $G[n]$ is a compatible zero-dimensional subgroup of $G$
\cite[Proposition~3.1]{Eleftheriou2012a}. However, to prove that the
homomorphism $n\colon G \to G$ is surjective, hence a covering, we would need
the divisibility of $G$, which in the locally definable case is still
conjectural. So we follow a different approach based on Corollary \ref{conj-b}(2) and an inductive argument. 

\bc\label{torsion}
Let $G$ be a locally definable abelian connected group of dimension $d$.
Then for each positive integer $n$ the $n$-torsion subgroup~$G[n]$ of~$G$ is finite, and it has at most $n^d$~elements.
\ec
\bp By  \cite[Proposition~3.1]{Eleftheriou2012a} $G[n]$ is a compatible subgroup of $G$. In particular, for $p$ prime, $G[p]$ is compatible and by Corollary \ref{conj-b}(2) we have $G[p] \cong {(\Z/p\Z)}^s$ for some $s\leq d$. The desired result now follows by elementary arguments in abstract group theory using only the fact that $G$ is abelian. Indeed note that if $p$ is prime, multiplication by $p$ gives a homomorphism $p: G[p^{k+1}] \to G[p^k]$ with kernel $G[p]$. So $|G[p^{k+1}]| \leq |G[p]| \cdot |G[p^k]|$ and by induction on $k$, $G[p^k]$ has at most 
$|G[p]|^k = {(p^s)}^k$ elements. This yields the desired result when $n$ is a power of a prime. The general case follows by considering the prime decomposition $n = p_1^{\alpha_1} \cdot \ldots \cdot p_l^{\alpha_l}$ of~$n$ and the isomorphism $G[n]\cong G[p_1^{\alpha_1}] \oplus \ldots \oplus G[p_l^{\alpha_l}]$.
\ep

If $G$ is divisible we can strengthen the conclusion of Corollary \ref{torsion} as follows. 

\br \label{div} Let $G$ be a locally definable connected divisible abelian group
of dimension $d$. Then for each prime $p$ and positive integer $k$, $G[p^k] \cong (\Z/p^k\Z)^s$ for some $s\leq d$ possibly depending on $p$ but not on $k$. 
\er
\bp 
Let $x_1,\dotsc, x_s\in G[p]$ be a
basis of~$G[p]$ as a vector space over~$\Z/p\Z$. By Corollary \ref{torsion} we have $s \leq d$. Since $G$ is divisible, there are~$y_1, \dotsc, y_s\in G$ such that $p^{k-1}y_i = x_i$ for~$i=1,\ldots,s$.
It is then easy to verify, by induction on~$k$, that $y_1, \dotsc, y_s$
have order~$p^k$, and $G[p^k]$ is a free $\Z/p^k\Z$-module generated by~$y_1, \ldots, y_s$. 
\ep

Remark \ref{div} does not settle the question whether there is some relation between $G[p]$ and $G[q]$ for different primes $p,q$. For instance, even assuming divisibility, we are not able to exclude the possibility that $G[6] \cong (\Z/3\Z)^2 \times \Z/2\Z$. However in Corollary \ref{same} we will show, under an additional convexity assumption, that $G[n] \cong (\Z/n\Z)^s$ for some $s\leq \dim(G)$ depending only on $G$ and not on $n$.

\section{Convexity}\label{sec-convexity}

In this section we prove the Conjectures of Eleftheriou and Peterzil mentioned in the introduction under a convexity assumption. Let us first recall the following definition. 

\bd \label{bounded}
Let $G$ be a locally definable group in a $\kappa$-saturated strongly $\kappa$-homogeneous o-minimal structure $M$ for some sufficiently big cardinal $\kappa$. 
\begin{enumerate}
\item A subset $X\subseteq G$ is {\em type-definable} if it is the intersection $<\kappa$ definable sets. 
\item A type-definable subgroup $H<G$ has {\em bounded index} if there are no new cosets of $H$ in $G$ in elementary extensions of $M$. Equivalently $[G:H]<\kappa$. 
\item If $H\lhd G$ has bounded index, we say that $X \subseteq G/H$ is open in the {\em logic topology} if its preimage in $G$ is the union of $<\kappa$ definable sets. 
\item If there is a smallest type-definable subgroup of $G$ of bounded index we call it $G^{00}$ and say that $G^{00}$ exists. 
\end{enumerate}
\ed

For $G$ definable, $G^{00}$ exists and $G/G^{00}$ is a real Lie group \cite{Berarducci2005}.  
For $G$ locally definable we have:

\bt \cite{Eleftheriou2012a} \label{G00} Let $G$ be an abelian, connected, definably generated group of dimension $d$. Then:
\begin{enumerate}
\item The subgroup $G^{00}$ exists if and only if $G$ covers a definable group;
\item If $G^{00}$ exists, then $G^{00}$ is divisible and $G/G^{00}$ is a Lie group isomorphic to $\R^k \times {(\R/\Z)}^{r}$ for some $k,r$ with $k+r \leq d$. 
\end{enumerate}
\et

Although we will not need it explicitly, let us also recall that, under the same hypothesis, in \cite{Eleftheriou2012a} it is also shown that $G^{00}$ exists if and only if $G$ has a generic definable subset, where a subset $X$ of $G$ is {\em generic} if for every definable $Y\subseteq G$ finitely many translates of $X$  cover $Y$. 

The other ingredient that we need is the following notion of convexity. 

\begin{definition}\label{convex}
Let $C$ be a subset of an abelian group~$G$. We say that $C$ is {\em convex} if for every 
$a,b\in C$ and $m,n\in \N$, not both null, $C$ contains every solution $x\in G$ of the equation 
$(m+n)x=ma+nb$. Note that if $G$ is divisible, there will be at least one solution. Given a subset $X\subseteq G$, the {\em convex hull} $\rh (X)$ of $X$ is the smallest convex set containing $X$. It can be equivalently defined as the set of all $x\in G$ such that $kx = a_1+ \ldots + a_k$ for some positive integer $k$ and some $a_1, \ldots, a_k \in X$ not necessarily distinct.  Note that the convex hull of a definable set need not be definable. We say that a locally definable abelian group has~{\em definably bounded convex hulls} if for all definable~$X\subseteq G$ there is a
definable~$Y\subseteq G$ containing $\rh(X)$.
\end{definition}

\def\repsum#1#2{\Sigma_{#1}{#2}}
\def\mult#1#2{{#1}{#2}}

\begin{notation} Let $X$ be a subset of an abelian group~$G$. We write $nX$ for $\left\{ nx\;|\;x\in X \right\}$ and $\repsum n X$ for $\{x_1+\ldots + x_n \st x_1,\ldots,x_n \in X\}$, where $+$ is the group operation. 
\end{notation}

\br \label{simple} If $G$ is divisible and torsion free, then $X$ is convex if and only if $nX = \repsum n X$ for every positive $n\in \N$. In this case $G$ is a $\Q$-vector space and convexity has the usual meaning (the $\Q$-segment between two points in $X$ is contained in $X$). If $G$ is only assumed to be divisible, then $X$ is convex if and only if we have {\em both} $nX = \repsum n X$ for every positive $n\in \N$ {\em and} $g+X = X$ for every torsion element $g\in G$. Example: the convex hull of the zero element of $G$ is the torsion subgroup of $G$. 
\er

We are now ready to state the main result of this section. 

\bt\label{convex-main}
Given a definably generated abelian connected group~$G$ in an o-minimal expansion $M$ of an ordered field, the following are equivalent: 
\begin{enumerate}
\item\label{convex-main-one}
For every definable set $X\subseteq G$, there is a definable set $Y \subseteq G$
such that $\repsum n X \subseteq \mult n Y$ for all~$n\in\N$.
\item\label{convex-main-two}
The group~$G$ is a cover of a definable group.
\item\label{convex-main-three}
The group~$G$ is divisible and has definably bounded convex hulls.
\end{enumerate}
Moreover these properties are stable under elementary extensions (i.e.\,preserved upwards and downwards). 
\et
\def\pnt#1{{\rm(\ref{convex-main-#1})}}

Theorem \ref{convex-main} says in particular that if a locally definable connected abelian group $G$ is generated by a definable convex subset, then $G$ is a cover of a definable group. Indeed it suffices that $G$ is generated by a set whose convex hull is contained in a definable set. 

\br \label{con} Condition (3) is equivalent to the conjunction of (1) and the condition that the torsion subgroup of $G$ is contained in a definable set. \er 
\bp Let us first observe that the convex hull of the identity of the group is the torsion subgroup of $G$. Therefore if (3) holds the torsion subgroup is contained in a definable set. Unraveling the definitions, (1) says that given a definable set $X \subseteq G$ there is a definable set $Y \subseteq G$, such that, for every positive integer $k$ and elements $a_1, \ldots, a_k\in X$ (not necessarily distinct), the equation $a_1 + \ldots + a_k = ky$ has {\em at least} one solution $y$ in $Y$. Condition (3) says almost the same thing, except that it is required that {\em all} the solutions $y$ of the above equation belong to $Y$ (it also says that $G$ is divisible, so there is at least one solution). Thus clearly (3) implies (1). 

Conversely assume that (1) holds and the torsion subgroup is contained in a definable set $Z \subseteq G$. Given a particular solution $y=y_0$ of $a_1 + \ldots + a_k = ky$, all the other solutions differ from $y_0$ by a $k$-torsion element, namely $y-y_0 \in G[k] \subseteq Z$. So if $X$ is given and we take $Y$ as granted by (1), then $Y + Z$ contains the convex hull of $X$ and we get (3).  
\ep 

In the proof of the Theorem we will make use of the following remark, which justifies the technical condition (1).  

\br \label{con2} Condition (1) in Theorem~\ref{convex-main} is inherited by the quotients of $G$, namely if $G$ satisfies (1) and $H<G$ is compatible, also $G/H$ satisfies (1). \er 

We will first prove Theorem \ref{convex-main} under the assumption that the o-minimal structure $M$ is sufficiently saturated. In the next section we show how to relax the saturation assumption.  
The next lemma contains the main idea of our argument. 

\begin{lemma}\label{convex-alias} (Assume $M$ $\aleph_1$-saturated.) 
Let $G$ be a locally definable abelian group generated by a definable
set~$X$ such that $0\in X$ and~$X=-X$. Assume that for some definable subset~$Y$ of~$G$
and for all positive $n\in\mathbb{N}$
we have~$\mult n Y \nsubseteq \repsum n X$.
Then there is an infinite cyclic compatible subgroup of~$G$.
\end{lemma}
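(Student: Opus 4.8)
The plan is to attach to $X$ the natural word-length filtration and then produce a single element all of whose multiples escape every level of this filtration. Since $0\in X$ the sets $\Sigma_n X$ increase with $n$, and since $X=-X$ generates $G$ we have $G=\bigcup_{n\in\N}\Sigma_n X$; this is the presentation of $G$ that I will use. For $g\in G$ write $|g|$ for the least $n$ with $g\in\Sigma_n X$. The first reduction is that it suffices to find $g\in G$ with $|mg|>m$ for every $m\geq 1$. Indeed such a $g$ has infinite order, and since $X=-X$ gives $|{-g}|=|g|$ we get $|kg|>|k|$ for all $k\neq 0$; hence $kg\in\Sigma_m X$ forces $|k|<m$, so $\langle g\rangle\cap\Sigma_m X$ is finite for each $m$. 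As any definable subset of $G$ lies in some $\Sigma_m X$ by $\aleph_1$-saturation, $\langle g\rangle$ meets every definable set in a finite, hence definable, set; that is, $\langle g\rangle$ is an infinite cyclic compatible subgroup.

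To find such a $g$ I would turn the hypothesis into a family of definable \emph{escape sets}. For $n\geq 1$ put $Y_n=\{y\in Y:\ ny\notin\Sigma_n X\}$. This is definable: multiplication by $n$ is a locally definable map whose restriction to $Y$ is definable, so $Y_n$ is the complement in $Y$ of the preimage of the definable set $\Sigma_n X$. The hypothesis $nY\nsubseteq\Sigma_n X$ says exactly that each $Y_n$ is non-empty. The key observation is a monotonicity in the divisibility order: if $m\mid n$ then $Y_n\subseteq Y_m$. To see this, suppose $y\in Y\setminus Y_m$, so $my=x_1+\dots+x_m$ with $x_i\in X$; writing $n=qm$ we get $ny=q(my)=qx_1+\dots+qx_m$, and since each $qx_i$ is a sum of $q$ elements of $X$, $ny$ is a sum of $qm=n$ elements of $X$. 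Thus $ny\in\Sigma_n X$, i.e.\ $y\notin Y_n$.

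Finally I would specialize to the factorial scales. Since $N!\mid (N+1)!$, the monotonicity gives a decreasing chain $Y_{1!}\supseteq Y_{2!}\supseteq\cdots$ of non-empty definable sets, so by $\aleph_1$-saturation their intersection contains some element $g$. For any $m\geq 1$, choosing $N\geq m$ we have $m\mid N!$, hence $g\in Y_{N!}\subseteq Y_m$, that is $mg\notin\Sigma_m X$ and $|mg|>m$. This $g$ is exactly the element demanded by the first paragraph, so $\langle g\rangle$ is the desired infinite cyclic compatible subgroup.

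The main obstacle is precisely the passage from the hypothesis, which only furnishes \emph{some} witness escaping at each \emph{single} scale $n$, to a \emph{single} element escaping at \emph{all} scales simultaneously; a naive finite-intersection argument fails because the $Y_n$ need not be nested in $n$. The divisibility monotonicity $Y_n\subseteq Y_m$ for $m\mid n$ is what repairs this, organizing the family into a genuinely decreasing chain along the factorials, after which $\aleph_1$-saturation produces the common point and the compatibility argument closes the proof.
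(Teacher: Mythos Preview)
Your proof is correct and takes a genuinely cleaner route than the paper's. The paper argues via types: it picks witnesses $a_n\in Y$ with $na_n\notin\Sigma_n X$, lets $p_n$ be the type of $a_n$ over a countable parameter set, uses compactness of the Stone space to find an accumulation point $q$ of $\{p_n\}$, and realizes $q$ by some $b\in Y$ via $\aleph_1$-saturation. It then proves an arithmetic claim of the form ``if $ny\notin\Sigma_n X$ and $n\geq 4km$ then $2my\notin\Sigma_m X$'' (where $k$ is chosen so that $Y\subseteq\Sigma_k X$), and uses this to conclude $2mb\notin\Sigma_m X$ for all $m$, whence $\Z\,2b$ is compatible.

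Your divisibility monotonicity $Y_n\subseteq Y_m$ for $m\mid n$ replaces all of this. It is sharper (no constant $k$, no factor $2$, no ``$n$ much larger than $m$''), uses only that $G$ is abelian, and turns the collection $\{Y_n\}$ into an honestly decreasing chain along the factorials, so that a single application of $\aleph_1$-saturation to a nested family of non-empty definable sets produces the element $g$ directly. No types, no accumulation points. The endgame---showing $\langle g\rangle$ is compatible because $\langle g\rangle\cap\Sigma_m X$ is finite for each $m$---is the same in both proofs. What the paper's argument buys is a monotonicity that does not require divisibility of the indices, only a size gap; but for the lemma as stated your approach is shorter and more transparent.
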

\begin{proof}
By our assumptions for every $n$ there is an element $a_n \in Y$ with $na_n \nin \repsum n X$. Let $A \subseteq M$ be a countable set containing all the parameters needed to locally define $G$ and to define $X$ and $Y$. 
Let $p_n(x)$ be the type of $a_n$ over $A$. So $p_n(x) \in S_Y(A)$, where $S_Y(A)$ is the space of types in the parameters $A$ containing a defining formula for $Y$. Since $S_Y(A)$ is compact, there is some $q(x) \in S_Y(A)$ which is an accumulation point of $\{p_n: n > 0\}$. Let $b \in Y$ be a realization of the type $q$ (here we need $\aleph_1$-saturation). It suffices to show that $\Z b$ is an infinite compatible subgroup of $G$. We will actually prove that $\Z 2b$ is an infinite compatible subgroup of $G$, which is clearly equivalent (and in any case it suffices for our purposes). Choose $k$ so big that $Y \subseteq \Sigma_k X$ and let $n,m$ be any positive integers with $n \geq 4km$.

\noindent {\bf Claim.} For all $y\in Y$, if $ny \not\in \Sigma_n X$, then $2m y \not\in \Sigma_m X$.

Granted the claim, we have in particular $2m a_n \not\in \Sigma_m X$ for all $n \geq 4km$. Since the type $q(x)$ of $b$ is an accumulation point of $\{p_n: n> 0\}$, where $p_n$ is the type of $a_n$, it follows that  $2m b \not\in \Sigma_m X$ for all $m$. Thus for $m\to \infty$ the point $2mb$ eventually escapes from every definable subset of $G$. This implies that $\Z 2b$ is an infinite compatible subgroup of $G$. 
 
It remains to prove the claim. Let $n \geq 4km$ and let $y\in Y$ be such that $2my \in \Sigma_m X$.  We must show that $ny \in \Sigma_n X$. 
Write $n = 2mq + r$ with $r < 2m$. So $ny = 2m q y + ry \in \Sigma_{mq}X + \Sigma_{kr}X$. To prove $ny \in \Sigma_n X$ it suffices to show that $mq + kr \leq n$. Indeed we have $mq + kr < m \lfloor \frac n {2m} \rfloor + k 2m \leq n$ where the last inequality follows from the assumption $n \geq 4km$. The proof is thus complete.  
\end{proof}

\bc \label{cyclic-subgroup} (Assume $M$ $\aleph_1$-saturated.) 
Let $G$ be a locally definable abelian group generated by a definable set.
If $G$ is divisible and has property~\pnt{one} of Theorem~\ref{convex-main},
then either $G$ is definable or $G$ has an infinite cyclic compatible subgroup.
\ec
\begin{proof}
Let $X$ be a definable set generating~$G$. Without loss of
generality~$0\in X$ and~$X=-X$. Assume that $G$ is not definable, i.e.\ $n<m$ implies~ $\repsum m X
\nsubseteq \repsum n X$.
Let $Y'$ be~$\mult 2 Y$ where $Y$ is a definable set
witnessing property~\pnt{one} for~$X$. We claim that
for our sets~$X$ and~$Y'$ the hypothesis of Lemma~\ref{convex-alias} holds. In
fact $\mult n Y' = \mult {2n} Y \supseteq \repsum {2n} X \nsubseteq \repsum n X$.
\end{proof}

We are now ready to complete the proof of the Theorem in the saturated case. 

\begin{proof}[Proof of Theorem~\ref{convex-main} (for $M$ sufficiently saturated)] 
We have already remarked that \pnt{three} implies \pnt{one} (Remark \ref{con}). 

Now assume (1) with the aim of proving (2). Assume $M$ $\aleph_1$-saturated. We need to prove that 
$G$ covers a definable group.
Let $\Gamma$ be a torsion free discrete compatible subgroup of~$G$ of
maximal rank, which exists by Corollary~\ref{conj-b}. Consider~$G/\Gamma$. Observe that $G/\Gamma$ inherits property~\pnt{one} from~$G$ by Remark \ref{con2}. We claim that $G/\Gamma$ is definable. 
If not, then by Corollary~\ref{cyclic-subgroup} it has an infinite cyclic compatible
subgroup~$\Lambda$ and, as in the proof
of~\cite[Theorem~2.5]{Eleftheriou2012b}, the inverse image of~$\Lambda$ in~$G$ has rank
greater than~$\rank(\Gamma)$. This contradiction establishes \pnt{two}. 

Now assume \pnt{two} with the aim of proving \pnt{three}. Assume $M$ sufficiently saturated (as in Definition \ref{bounded}).  
By~\cite[Theorem~3.9]{Eleftheriou2012a} $G^{00}$ exists. By~\cite[Proposition~3.5]{Eleftheriou2012a}
$G^{00}$ is divisible and $G/G^{00}$, endowed with the logic topology, is an abelian connected real Lie~group. Let $X$ be an 
$M$-definable subset of $G$. We must find an $M$-definable set $D\subseteq G$ containing the convex hull of $X$. To this aim consider the natural projection  $\pi: G \to G/G^{00}$. 
We will make use of the following facts, which follow easily from the definition of the logic topology (see \cite{Eleftheriou2012a}):
\begin{enumerate}
\item[(i)] The image under $\pi$ of a definable subset of $G$ is a compact subset of $G/G^{00}$. 
\item[(ii)] The preimage under $\pi$ of a compact subset of $G/G^{00}$ is contained in a definable subset of $G$. 
\end{enumerate}
Moreover, since $G^{00}$ is divisible, the preimage under $\pi$ of a convex (in the sense of Definition \ref{convex}) subset of $G/G^{00}$ is easily seen to be a  convex (but not necessarily definable) subset of $G$. The strategy of the proof should now be clear. Given $X$, we consider its projection $\pi(X) \subseteq G/G^{00}$, which is compact. 
By \cite{Eleftheriou2012a}  $G/G^{00} \cong \R^k \times (\R/\Z)^l$ for some $k,l\in \N$. It then easily follows that any compact subset of $G/G^{00}$ is contained in a compact convex set. So $\pi(X)$ is contained in a compact convex set, and its preimage $\pi^{-1}(\pi(X))$ is convex and contained in a definable set $D$. Finally note that $D$ contains the convex hull of $X$. 
\end{proof}

We have thus completed the proof of Theorem \ref{convex-main} in the saturated case. In the next section we will show how to prove it in general. Let us however first derive some corollaries of the theorem.  

\bc \label{quot} Let $G$ be a locally definable abelian connected group. If $G$ covers a definable group, then also any quotient of $G$ by a compatible subgroup covers a definable group. \ec 
\bp By the equivalence between condition (1) and (2) in Theorem \ref{convex-main} and the fact that (1) is preserved under quotients. \ep

\begin{discussion}
Let us now discuss the conjecture of Eleftheriou and Peterzil (Conjecture \ref{cong-cover}) in the light of Theorem \ref{convex-main}. According to the conjecture, every definably generated connected abelian group is a cover a definable group. By Theorem \ref{convex-main} this is equivalent to the conjecture that every definably generated connected abelian group $G$ has definably bounded convex hulls. By Corollary \ref{quot}, passing to the universal cover, it suffices to state this last conjecture under the additional assumption that $G$ is simply connected. This may shed some light on the conjecture of Eleftheriou and Peterzil, at least if we assume divisibility. In fact the universal cover of a divisible locally definable abelian group is always divisible and torsion free (see Proposition \ref{compatible} and Proposition \ref{uc-torsion-free} below), and for divisible torsion free groups the convexity condition takes a very simple form (see Remark \ref{simple}).  So finally we are lead to the conjecture that a definably generated divisible torsion free group has definably bounded convex hulls. This is equivalent to the conjecture of Eleftheriou and Peterzil if we assume that the relevant groups are divisible.
\end{discussion}

We finish the section with a proof of the two propositions mentioned in the above discussion. 

\bprop\label{compatible}
Let $\pi\colon G \to H$ be a locally definable covering homomorphism
between locally definable connected abelian groups. If $H$ is divisible, then $G$
is divisible.
\eprop
\bp 
We must show that $nG= G$ for all~$n\in \N$. 
Without loss of generality we can assume $M$ sufficiently saturated (if not go to a saturated extension and note that the property to be proved is preserved). Since $H$ is divisible we
know that~$\pi(nG) = nH = H$. It follows that~$(nG)(\ker \pi) = G$. So $nG$ has
bounded index in~$G$ (see Definition \ref{bounded}). A locally definable subgroup of bounded index is
always a compatible subgroup~\cite[Fact~2.3]{Eleftheriou2012a}. Therefore $nG$ is a
compatible subgroup of~$G$. But since $nG$ is open and $G$ is connected,
it must coincide with~$G$.
\ep

\begin{proposition}\label{uc-torsion-free}
The universal cover of a locally definable abelian connected
divisible group is divisible and torsion free.
\end{proposition}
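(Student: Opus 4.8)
The plan is to handle divisibility and torsion-freeness separately: the first is immediate, and the second reduces to the simple connectivity of the universal cover.

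Let $H$ be a locally definable abelian connected divisible group and let $\pi\colon U \to H$ be its universal cover, so that $U$ is connected, $\pi$ is a covering homomorphism, and $\pi_1(U) = 0$. Divisibility of $U$ comes for free: since $\pi\colon U \to H$ is a covering homomorphism between locally definable connected abelian groups and $H$ is divisible, Proposition~\ref{compatible} gives that $U$ is divisible.

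For torsion-freeness the idea is to exploit divisibility in order to turn multiplication by a prime into a \emph{self-covering} of $U$. Fix a prime $p$ and consider the locally definable homomorphism $p\colon U \to U$, $x\mapsto px$ (it is locally definable, being a fixed finite iterate of the group operation). Since $U$ is divisible we have $pU = U$, so this map is surjective, and its kernel is exactly the $p$-torsion subgroup $U[p]$, which is finite, hence zero-dimensional, by Corollary~\ref{torsion}. By the characterization recalled in Section~\ref{covers}, a surjective locally definable homomorphism with zero-dimensional kernel is a covering homomorphism; thus $p\colon U\to U$ is a covering of $U$ by itself.

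Now I would invoke the covering formula $\ker(f)\cong \pi_1(G)/f_*(\pi_1(U))$ from Section~\ref{covers}, applied to $f=p$ with source and target both equal to $U$. Since $U$ is the universal cover, $\pi_1(U)=0$, and therefore $U[p]=\ker(p)\cong \pi_1(U)/p_*(\pi_1(U)) = 0$. As this holds for every prime $p$, the group $U$ has no elements of prime order, and hence no nonzero torsion at all (an element of finite order $n>1$ would yield a nonzero element of $U[p]$ for some prime $p\mid n$). Thus $U$ is torsion free. The routine checks are only that $x\mapsto px$ is locally definable and that its kernel is $U[p]$; the one point to get right — where all the content lies — is that divisibility promotes multiplication by $p$ to a covering of $U$ by itself, after which simple connectivity forces the kernel to be trivial (equivalently, a connected covering of a simply connected space has trivial deck group).
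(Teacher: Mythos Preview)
Your proof is correct and follows essentially the same route as the paper: divisibility via Proposition~\ref{compatible}, then turning multiplication by an integer into a self-covering of $U$ and using simple connectivity to force the kernel to vanish. The only cosmetic differences are that the paper works with a general $n$ rather than primes, and appeals directly to $\dim(U)=\dim(U)$ (rather than Corollary~\ref{torsion}) to see that $n\colon U\to U$ is a covering.
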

\begin{proof}
Let $G$ be our locally definable abelian connected divisible
group, and let $U$ be its universal cover. 
Using Proposition~\ref{compatible} we have the divisibility of~$U$, so for every positive $n\in \N$ multiplication
by $n$ is a covering homomorphism $n: U \to U$. Its induced homomorphism on the fundamental group is again given by multiplication by $n$, so by the results on coverings in Section \ref{covers} we have $U[n] \cong \pi_1(U)/n\pi_1(U)$. Since $\pi_1(U)=0$, we get $U[n]=0$. So $U$ is torsion free.  
\end{proof}

\section{Avoiding saturation}

In this section we prove Theorem \ref{convex-main} when $M$ is not assumed to be saturated. So in particular the theorem is valid when $M$ an o-minimal expansion of the field of real numbers (in which case $G(M)$ is a real Lie group). We need the following lemmas. 

\begin{lemma}\label{saturation}
Let $M'$ be an elementary extension of~$M$. Let $G$ be an~$M$-locally definable connected abelian group. If $G(M')$ covers an $M'$-definable group, then $G(M)$ covers some $M$-definable group (the converse is also true and obvious). 
\end{lemma}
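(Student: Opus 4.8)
The plan is to turn the statement into a descent problem and to solve it with a handful of elementary transfer facts together with one genuinely model-theoretic invariance. First I would reduce to the case where $M'$ is $\aleph_1$-saturated (indeed $|M|^+$-saturated) over $M$: if $M^*\succ M'$ is such a saturated extension, the given $M'$-covering base-changes to an $M^*$-covering, because surjectivity and the vanishing of the dimension of the kernel are checked piecewise and are preserved upwards; as $M^*\succ M$ we may thus replace $M'$ by $M^*$. By the already established saturated case of Theorem~\ref{convex-main}, $G(M')$ then satisfies condition~(1) and is divisible.

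Next I would isolate three facts that transfer without any saturation of $M$. (i) \emph{Condition~(1) descends.} For $M$-definable $X$, a witness $Y$ over $M'$ is, by the convention on locally definable sets, contained in a single piece $X_j(M')$ of the fixed presentation of $G$; since the containment $\Sigma_n X\subseteq nY$ only improves when $Y$ is enlarged, the whole $M$-definable piece $X_j$ witnesses it over $M'$, i.e. $\Sigma_n X\subseteq n X_j$ holds in $M'$ for every $n$. Each such inclusion is a first-order $M$-sentence, hence holds in $M$, so $X_j$ witnesses condition~(1) for $X$ over $M$. (ii) \emph{Definability of a locally definable connected group is preserved in both directions}: ``definable'' means that the presentation stabilises, which is the countable conjunction of the first-order statements $\Sigma_n X\subseteq X_{i_0}$ (for a fixed generating piece $X_{i_0}$), each preserved up and down. (iii) \emph{Zero-dimensional compatible subgroups do not change under base change}: each $\Gamma\cap X_i$ is a \emph{finite} definable set, so $\Gamma(M')=\Gamma$ as abstract groups.

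With these in place the main line is as in the saturated proof. Using Corollary~\ref{conj-b} I would pick a torsion-free zero-dimensional compatible subgroup $\Gamma<G(M)$ of maximal rank~$k$ and form $G/\Gamma$, which inherits condition~(1) by Remark~\ref{con2}. By~(ii), $G(M)/\Gamma$ is definable if and only if $G(M')/\Gamma(M')$ is, and by~(iii) the latter equals $G(M')/\Gamma$. Over the saturated $M'$ I run the argument of Theorem~\ref{convex-main}: were $G(M')/\Gamma$ not definable, Corollary~\ref{cyclic-subgroup} (applicable since the quotient is divisible and satisfies condition~(1)) would give an infinite cyclic compatible subgroup of it, whose preimage in $G(M')$ is a torsion-free zero-dimensional compatible subgroup of rank $k+1$. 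Ruling this out shows $G(M')/\Gamma$ is definable, and pulling definability back down by~(ii) yields that $G(M)/\Gamma$ is definable; hence $G(M)$ covers it.

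The hard part is exactly the step ``ruling this out'': it requires that the maximal rank $k$ of zero-dimensional compatible subgroups be the \emph{same} over $M$ and over $M'$, for otherwise the rank-$(k{+}1)$ subgroup produced over $M'$ does not contradict the choice of $\Gamma$ over the non-saturated $M$. One inequality is free from~(iii). For the other — that an elementary extension creates no zero-dimensional compatible subgroup of larger rank — I would use the covering dictionary of Section~\ref{covers} together with the base-change invariance of the o-minimal first homology: passing to the universal cover $U$, a zero-dimensional compatible $\Gamma$ lifts to a compatible $\Lambda\supseteq\pi_1(G)$ with $\rank\Gamma=\rank\Lambda-\rank\pi_1(G)$ and $\Lambda\cong\pi_1(U/\Lambda)$, so that the attainable rank is governed by the groups $H_1(\,\cdot\,;\Q)$, which are computed from a definable triangulation and are therefore insensitive to the choice of model. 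Making this precise — equivalently, producing from a high-rank compatible subgroup over $M'$ one of the same rank over $M$ — is the real obstacle; everything surrounding it is routine first-order bookkeeping.
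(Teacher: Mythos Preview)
Your reduction to saturated $M'$, your descent of condition~(1), and your transfer facts (ii)--(iii) are all correct and useful; the overall architecture is sensible. But the proof is incomplete, and you have located the gap yourself: you need that the maximal rank of a torsion-free zero-dimensional compatible subgroup does not increase when passing from $G(M)$ to $G(M')$, and your proposed justification via $H_1$ does not close it. The invariance of o-minimal singular homology under elementary extensions concerns a \emph{fixed} locally definable object: $H_1\bigl(U/\Lambda;\Q\bigr)$ is the same computed over $M$ or over $M'$ \emph{provided} $\Lambda$ is already a compatible subgroup of $U(M)$. The rank-$(k{+}1)$ subgroup $\widetilde\Lambda'$ your argument produces lives only in $U(M')$; the quotient $U/\widetilde\Lambda'$ has no $M$-incarnation, so there is nothing for the triangulation-invariance of $H_1$ to act on. What Corollary~\ref{conj-b} gives is the uniform bound $\dim(G)$ on the rank in both models, not that the \emph{supremum actually attained} is model-independent. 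Concretely, nothing you have written excludes the possibility that the maximal rank over $M$ is $k$ while over $M'$ it is $k+1$; in that scenario Corollary~\ref{cyclic-subgroup} over $M'$ produces a perfectly consistent rank-$(k{+}1)$ subgroup and no contradiction arises.

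The paper avoids this rank-comparison problem altogether. Instead of choosing $\Gamma$ by maximising rank over $M$ and then arguing by contradiction, it works directly with $G(M')/G^{00}\cong\R^{k}\times(\R/\Z)^{l}$ to pinpoint which $\Gamma$ to take: one picks $z_1,\dots,z_k$ in the torsion-free factor $L\cong\R^k$, and the key manoeuvre is a \emph{perturbation argument} to arrange that their lifts $u_1,\dots,u_k$ lie in $G(M)$ rather than merely in $G(M')$. Since the preimage of any open neighbourhood of $z_i$ is a union of $M$-definable sets, it contains $M$-points; and a small perturbation of the $z_i$ still spans a subspace transversal to $0\times\R^l$, so an automorphism of $G/G^{00}$ realigns them. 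One then checks, as in \cite[Lemma~3.3]{Eleftheriou2012a}, that $G(M)/\Gamma$ is definable. This is a direct construction: it never compares maximal ranks across models, and it is exactly the step your approach is missing. If you want to salvage your line, the honest completion is to perturb the generators of the rank-$(k{+}1)$ subgroup over $M'$ into $G(M)$---but that is the paper's argument, and it needs the structure of $G/G^{00}$ rather than homological invariance.
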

A natural approach to the proof would be to use the characterization given in \cite{Eleftheriou2012a}: $G$ covers a definable group if and only if $G$ has a definable generic set. This latter condition is easily seen to transfer from $M'$ to $M$ and viceversa, but unfortunately the proof of this equivalence given in \cite{Eleftheriou2012a} uses saturation (via the consideration of $G/G^{00}$). So we need to analyze more closely the details of the argument in \cite{Eleftheriou2012a}. 
\begin{proof}[Proof of Lemma \ref{saturation}]
In \cite{Eleftheriou2012a} Eleftheriou and Peterzil proved that $G$ covers of  a definable group if and only if $G$ has a compatible subgroup $\Gamma$ such that $G/\Gamma$ is definable and $\Gamma$ is isomorphic to $\Z^k$ for some $k$. One direction is clear: if $\Gamma$ exists, $G$ covers the definable group $G/\Gamma$. The proof of the other direction in \cite{Eleftheriou2012a} however requires to work in a saturated structure $M$, since one makes use of $G/G^{00}$ in order to find the appropriate $\Gamma<G$. It turns out however that a modification of the proof 
in \cite{Eleftheriou2012a} gives the desired result. 
So assume that $G(M')$ covers an $M'$-definable group. We can assume $M'$ sufficiently saturated. As in \cite{Eleftheriou2012a}, working in $M'$, we have that  $G^{00}$ exists, and $G/G^{00}$ is an abelian Lie group isomorphic to $\R^k \times {(\R/\Z)}^l$ for some $k,l\in \N$. 
So we can write $G/G^{00}$ as the direct sum $L+K$ of two subgroups with $L\cong \R^k$ and $K\cong (\R/\Z)^l$. Note that $K$ is uniquely determined (it is the closure of the torsion subgroup), but $L$ is not. Our proof will rely on the possibility of making a suitable choice of $L$. To this aim fix a subgroup $\Gamma$ of $G/G^{00}$ isomorphic to $\Z^k$, say $\Z z_1 + \ldots + \Z z_k < G/G^{00}$, with $z_1, \ldots, z_k \in L$ (so $\Gamma < L$). Choose $u_1, \ldots, u_k\in G(M')$ such that $\pi(u_i) = z_i$, where $\pi: G\to G/G^{00}$ is the projection, and let $\Gamma = \Z u_1 + \ldots + \Z u_k < G(M')$. Eleftheriou and Peterzil show that $G(M')/\Gamma$ is $M'$-definable. If we could choose  $u_1, \dots, u_k$ to be in $G(M)$, the same argument would show that $G(M)/\Gamma$ is $M$-definable and we would be done. However it may happen that there are no points $u_i$ in $G(M)$ mapping to the given points $z_i \in G/G^{00}$. To overcome the impasse, we pick a
small open neighbourhood~$V_i\subseteq G/G^{00}$ of each~$z_i$. The
inverse image~$U_i\subseteq G$ of~$V_i$ is a union of subsets of~$G$ definable over the ground model $M$. Hence $U_i$
contains a non-empty $M$-definable set~$D_i$, and we pick each~$u_i$ in $D_i(M)$. Let $z'_i=\pi(u_i)\in V_i$. It is easy to see
that, if the neighbourhoods~$V_i$ were chosen small enough, there is an
automorphism $\psi$ of $G/G^{00}$ mapping each~$z'_i$ to the
corresponding~$z_i$. In fact, considering the universal cover $\R^{k+l}$ of $\R^k \times (\R/\Z)^l$ this amount to show that if we have a $k$-tuple of points in $\R^{k+l}$ whose $\R$-linear span is a $k$-dimensional subspace transversal to the subspace $0\times \R^l$, then the same remains true after a small perturbation of the points. We have thus reduced to the case when $\Gamma = \Z u_1 + \ldots + \Z u_k$ where $u_1, \ldots, u_k$ are in $G(M)$. 
To complete the proof that $G(M)/\Gamma$ is definable we must find a definable set containing representatives for all the cosets of $\Gamma$ in $G(M)$. For this we can reason as in~\cite[Lemma~3.3]{Eleftheriou2012a}, going to a saturated model $M'$ if needed, but then observing that every $M'$-definable set is contained in an $M$-definable set. 
\end{proof}

\bl \label{transfer} Assume that $G$ is locally definable over $M$ and let $M'$ be an elementary extension of $M$. Then the convexity condition (1) of Theorem \ref{convex-main} holds for $G(M)$ if and only if it holds for $G(M')$. Similarly for condition (3). \el
\bp The key observation is that, since $G$ is locally definable over $M$, every $M'$-definable subset $X$ of $G(M')$ is contained in an $M$-definable set $X_1 \subseteq G(M')$. Now observe that 
condition (1) has the following form: for every definable set $X\subseteq G$ there exists a definable set $Y\subseteq G$ satisfying, for all positive integer $k$, a suitable first order property $\phi_{k,X,Y}$, where $\phi_{k,X,Y}$ says that every equations of the form $a_1+\ldots + a_k = ky$, with $a_1, \ldots, a_k$ in $X$, has at least one solution $y$ in $Y$. Clearly if $X,Y$ are defined over $M$, then $\phi_{k,X,Y}$ holds in $M$ if and only if it holds in $M'$. Moreover by enlarging $X$ or $Y$ we can always assume, in checking (1), that $X$ and $Y$ are defined over $M$. So we have the result for (1), and the proof for (3) is similar.  
\ep 

\begin{proof}[Proof of Theorem \ref{convex-main} (general case)]
Let $G$ be locally definable over $M$ and let $M'$ be a sufficiently saturated elementary extension of $M$. We have already proved that Theorem \ref{convex-main} holds for $G(M')$. By Lemmas \ref{saturation} and \ref{transfer} we deduce it for $G(M)$. \end{proof}

\section{Further consequences of convexity}
\label{fconv} 

In this section we will show that the convexity hypothesis considered in Section~\ref{sec-convexity} can be used to study locally definable connected abelian groups even without assuming that the group is definably generated. 
In particular, we will prove that under suitable hypothesis the fundamental group $\pi_1(G)$ is finitely generated. We recall that every definable set $X$ has a finitely generated fundamental group 
\cite{Berarducci2002a}, but this result does not extend to locally definable sets. So we need to make essential use of the group structure of $G$. 

\begin{proposition}\label{discrete} Let $s \in \N$ and let $\Gamma$ be a subgroup of $\Q^s$ of rank $s$.
Let $v_1, \dotsc, v_s$ be $\Q$-independent elements of~$\Gamma$, and assume
that $\Gamma \cap \rh(v_1, \dotsc, v_s)$ is finite, where $\rh(v_1, \ldots, v_s)$ is the convex hull of $\{v_1, \ldots, v_s\}$. Then $\Gamma$ is isomorphic to~$\Z^s$. 
\end{proposition}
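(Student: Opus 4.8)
The plan is to realize $\Gamma$ as a subgroup of the real vector space $\R^s$ and to prove that it is \emph{discrete} there. Since $\Gamma$ has rank $s$ it spans $\R^s$, and a discrete subgroup of $\R^s$ that spans it is a full lattice, so $\Gamma\cong\Z^s$ by the classical structure theorem for discrete subgroups of $\R^s$. Thus the whole problem reduces to deriving discreteness from the finiteness hypothesis.

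First I would fix an embedding of $\Q^s$ into $\R^s$ as its rational points. The elements $v_1,\dots,v_s$ then form an $\R$-basis, and $\Gamma$, having rank $s$, has $\R$-span equal to $\R^s$. Let $K\subseteq\R^s$ be the real convex hull of the vertices in question; it is a compact convex body. The point is that a \emph{rational} point of $K$ automatically has rational barycentric coordinates (because the $v_i$ are a basis), and hence already lies in $\rh(v_1,\dots,v_s)$; so $\Gamma\cap K=\Gamma\cap\rh(v_1,\dots,v_s)$, which is finite by hypothesis.

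The heart of the argument is to show that a subgroup meeting the compact body $K$ in only finitely many points must be discrete. Suppose it is not. Then the closure $\overline\Gamma$ is a closed subgroup of $\R^s$ which, by the structure theorem, decomposes as $W\oplus L$ with $W$ a nonzero linear subspace and $L$ discrete; moreover $\Gamma$ is dense in $\overline\Gamma$. Since each vertex $v_i$ lies in $\Gamma\subseteq\overline\Gamma$, the coset $v_i+W$ is contained in $\overline\Gamma$, and $\Gamma$ is dense in it (points of $\Gamma$ close to $v_i+W$ share its $L$-component, as $L$ is discrete, hence lie on $v_i+W$). I would then show that some translate $c+W$ with $c\in\Gamma$ meets the interior of $K$; as $W\neq 0$, density of $\Gamma$ there produces infinitely many points of $\Gamma$ inside the compact body $K$, contradicting finiteness. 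This last geometric step—finding a positive-dimensional slice of $\overline\Gamma$ inside $K$—is the main obstacle, and it is exactly where the full-dimensionality of $K$ is used; the vertices $v_i\in\Gamma$ serve to anchor the body to the group.

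Having established that $\Gamma$ is discrete, I would conclude by invoking the structure theorem once more: a discrete subgroup of $\R^s$ of rank $s$ is generated by $s$ $\R$-independent vectors, hence is free abelian of rank $s$, i.e.\ $\Gamma\cong\Z^s$. The only delicate point throughout is the discrete/non-discrete dichotomy and the geometric lemma of the third paragraph; the reduction to a lattice and the bookkeeping with barycentric coordinates are routine.
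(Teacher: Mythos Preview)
Your overall strategy matches the paper's: embed $\Gamma$ in $\R^s$, prove it is discrete there, and invoke the classification of discrete subgroups of~$\R^s$. The paper's proof is a two-line sketch asserting that ``the hypothesis implies that the group identity is an isolated point of~$\Gamma$'' and then appealing to that classification. So the only content, for both arguments, is the discreteness step.

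Unfortunately that step does not go through, and in fact the proposition as stated is false. The set $K=\rh(v_1,\dots,v_s)$ is the convex hull of $s$ linearly independent points in~$\R^s$, hence an $(s{-}1)$-simplex with \emph{empty} interior; your argument explicitly invokes ``the full-dimensionality of~$K$,'' which does not hold, so no translate $c+W$ can meet the interior of~$K$ and the density argument never starts. A concrete counterexample: take $s=2$, $\Gamma=\Z[\tfrac12]\times\Z\subset\Q^2$, $v_1=(1,0)$, $v_2=(0,1)$. A point $(t,1-t)$ on the segment $\rh(v_1,v_2)$ lies in~$\Gamma$ only if $1-t\in\Z$, so $\Gamma\cap\rh(v_1,v_2)=\{v_1,v_2\}$ is finite; yet $\Gamma$ is not finitely generated, and $(2^{-n},0)\to 0$ shows that $0$ is not isolated, so the paper's one-line claim fails as well. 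The natural repair is to replace $\rh(v_1,\dots,v_s)$ by $\rh(\pm v_1,\dots,\pm v_s)$, a full-dimensional body containing $0$ in its interior: then finiteness of the intersection immediately isolates~$0$, and both your argument and the paper's go through. This does not affect the downstream application, since the hypothesis of definably bounded convex hulls bounds $\rh(\pm v_1,\dots,\pm v_s)$ just as well as $\rh(v_1,\dots,v_s)$.
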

\bp
The hypothesis implies that the group identity is an isolated point of~$\Gamma$
in the topology inherited by~$\Q^d$ as a topological subgroup of~$\R^d$.
Now it suffices to recall that the only discrete subgroups of~$\R^d$ are
of form $\Z^s$ for some~$s$. 
\ep

\bt \label{fingen}
Let $G$ be a locally definable abelian connected group.
Assume that the universal cover~$U$ of~$G$ is divisible and has definably bounded convex
hulls.
Then $\pi_1(G) \cong \Z^s$ for some~$s\leq \dim(G)$. 
\et
\bp By Proposition \ref{uc-torsion-free} $U$ is divisible and torsion free, so it is a vector space over $\Q$. By the theory of covers $\pi_1(G)$ is isomorphic to a zero-dimensional compatible subgroup $\Gamma$ of $U$. So in particular $\pi_1(G)$ is torsion free. Moreover it has rank $\leq \dim(G)$ by Theorem \ref{pi-one}. It follows that $\Gamma$ is isomorphic to a subgroup of $\Q^s$ where $s$ is the rank of $\pi_1(G)$. 
Let $v_1,\dotsc,v_s\in \Gamma$ be $\Q$-linearly independent elements.
By the hypothesis
$\rh(v_1, \ldots, v_s)$ is contained in a definable subset~$D$ of~$U$.
Since $\Gamma$ is compatible and zero-dimensional, $\Gamma \cap D$ is finite.
So \textit{a fortiori} $\Gamma \cap \rh(v_1, \ldots, v_s)$ is finite.
Hence, by Proposition~\ref{discrete}, $\Gamma$ is isomorphic to~$\Z^s$.
\ep 

\bc \label{same} Under the same assumptions $G[n] \cong ({\Z/n\Z})^s$.
\ec 
\bp The assumption implies that $G$ is a connected divisible abelian locally definable group. 
Thus the multiplication by~$n$ is a covering homomorphism~$n\colon G\to
G$, and from the theory of covers $G[n] \cong
\pi_1(G)/n\pi_1(G)$~\cite[Theorem~3.15]{Edmundo2005}. Now apply theorem \ref{fingen}. 
\ep 

\bc \label{fgdisc} Let $G$ be a locally definable, abelian, connected group. 
Suppose that the universal cover of $G$ is divisible and has definably bounded convex hulls. 
Then every zero-dimensional compatible subgroup $\Gamma$ of $G$ is finitely generated, with at most $\dim(G)$ generators. 
\ec
\bp From the theory of covers it follows that $\Gamma$ is a quotient of $\pi_1(G/\Gamma)$, 
which is isomorphic to $\Z^s$ for some $s\leq \dim (G)$ by Theorem \ref{fingen}. 
\ep

\section{Examples and questions}
\label{examples}

\def\N{\mathbb{N}}
\def\st{\;|\;}

Let us recall the conjecture of ~\cite{Eleftheriou2012a}: 
every definably generated abelian connected group is a cover of a definable
group. We show that this conjecture cannot be generalized to non-abelian groups.

\begin{example}
There is a non-abelian locally definable group~$G$ such that $G$ is
generated by a definable generic set, but $G$ does not cover a definable group.
\end{example}
\bp 
Let $H$ be any definably connected centerless group of positive dimension
definable in an $\omega$-saturated real closed field~$M$. We will show that no
locally definable connected proper subgroup $G$ of~$H$ of maximal
dimension can cover a definable group; then reasoning as in \cite[Proposition 7.8]{Hrushovski2008}
we will construct such a subgroup which is generated by a definable generic set.
For the first part, let $G$ be a connected locally definable
proper subgroup of~$H$ with $\operatorname{dim}(G)=\operatorname{dim}(H)$.
Suppose that $G$ covers a definable group~$L$. Take any non-trivial
element~$x$ of~$G$ in the fibre over the identity of~$L$. Clearly $x$
must belong to the center of~$G$, hence the centralizer~$C_H(x)$ of~$x$
in~$H$ has maximal dimension. By the connectedness of~$H$ we
get~$C_H(x)=H$, hence $x\in Z(H)$, which is impossible since $H$
has trivial center.
Now we construct a locally definable subgroup~$G$ as before.
$H$ has a $C^1$ group-manifold structure. Taking a
local chart of~$H$ around the identity~$e$, we
can assume $e \in U\subseteq H$ for some open definable subset~$U$
of~$M^{\operatorname{dim}(G)}$ such that on $U$ the
group topology coincides with the subset topology. Without loss of
generality $e$ is $0\in{M}^{\operatorname{dim}(G)}$. Take a definable open
neighbourhood~$V$ of~$e$ such that the group operation restricted to
$V\times V$ is a differentiable function taking values in~$U$. Hence, for
all $x$ and~$y$ in~$V$, we have $x\cdot y = x + y + f(x,y)$ with
$f(x,y) \in o(|x|+|y|)$ for $(x,y)\to(e,e)$. By saturation, take an~$\epsilon>0$ such that
$|f(x,y)| \ll |x|+|y|$ for all $x$ and~$y$
with $|x|+|y|<\epsilon$ (where $a \ll b$ means $na < b$ for all $n\in \N$).
Basically, for elements smaller that $\epsilon$, the group structure
of~$H$ and the group structure of~$({M}^n, +)$ are infinitesimally
close, this enables us to construct our example in a straightforward way.
Pick a positive~$\delta\ll\epsilon$ and let $X$
be~$\{x\;|\;\left|x\right|<\delta\}$. We claim that the group~$G$ generated
by~$X$ is~$\bigcup_{i\in\mathbb{N}}X_i$, where
$X_i=\{x\;|\;\left|x\right|<i\delta\}$. In fact, clearly $X_i\cdot X_j
\subseteq X_{i+j+1}$, and for any~$x$ in~$X$ and
any~$n$ in~$\mathbb{N}$, we have $|(nx)\cdot x^{-n}| \ll |x|$, hence
$(nx)\cdot x^{-n}\in X$ and $X_n\subseteq X^{n+1}$. To prove that $X$ is generic
in~$G$ observe that there is a countable set~$Y=\{y_i\}_{i\in\mathbb{N}}$
such that $G=Y+\frac{1}{2}X$. It is easy to see that $G=Y\cdot X$.
\ep

With the next example we show that the fundamental group of a locally
definable H-space may not be finitely generated. (The reader may consult \cite{Dold1995} for the definition of H-space, and figure out the obvious adaptation of the definition to the locally definable category.) 
This may explain the difficulties in proving, without convexity assumptions, 
that the fundamental group of a locally definable connected abelian group is finitely generated. 

\begin{example}
There is a locally definable set $C$ with $\pi_1(C) \cong \Q$ and such that $C$ can 
be endowed with a locally definable H-space structure.
\end{example}
\bp 
Our H-space is going to be ``definably compact'' in the sense that all definable paths in it
have a limit.
Moreover we believe that the construction can be modified to yield a
locally definable manifold as well, however the details are very tedious
to verify.
Notice that the fundamental group of a compact topological manifold can
not be~$\mathbb{Q}$ \cite{Shelah1988}.
This is the plan of the example: working in a
real closed field~${M}$, first we describe a
definably compact locally definable space with $\mathbb{Q}$ as fundamental
group, then we show how to endow it with an H-space structure and how to embed it in ${M}^5$, and finally we suggest how to turn it into a locally definable manifold.
Let $\{G_i\}_{i=1,2,\dotsc}$ be infinitely many copies of~$SO(2)$. It is
useful to consider them as distinct groups. Consider the maps $m_i\colon
G_i\to G_{i+1}$ where $m_i(x) = ix$. Let $C_i$ be the mapping cylinder
of~$m_i$. By a slight abuse of notation we can consider $G_i$
and~$G_{i+1}$ as subspaces of~$C_i$. Then the definable fundamental group
of~$C=\bigcup_i C_i$ is~$\mathbb{Q}$. In fact, let $S_n$ denote the
initial segment~$\bigcup_{i\leq n} C_i$ of the union. Clearly $S_n$
retracts onto~$G_{n+1}$, hence $\pi_1(S_n) = \mathbb{Z}$. Also $\pi_1(C) =
\varinjlim \pi_1(S_n)$ where the maps are those induced by the inclusion.
Now, by the retraction, the inclusion of~$S_n$ in~$S_{n+1}$ induces on the
fundamental groups
the same map as the inclusion of~$G_{n+1}$ in~$C_{n+1}$, which is the
multiplication by~$n+1$. So $\pi_1(C)$ is the group generated by the
generators~$\alpha_i$ of~$\pi_1(G_i)$ with
relations~$i\alpha_i=\alpha_{i+1}$, and this is~$\mathbb{Q}$.
The H-space structure is given by the following map. Let $\mathbf{x}=(x,t)\in
C_i\setminus G_{i+1}\simeq G_i\times [0,1)$ and~$\mathbf{y}=(y,s)\in C_j\setminus
G_{j+1}\simeq G_j\times [0,1)$ be two elements of~$C$. If $i=j$ then we
let $\mu(\mathbf{x},\mathbf{y}) = (x\cdot y,\max(t,s))\in C_i\setminus
G_{i+1}$. If $i<j$ we let $\mu(\mathbf{x},\mathbf{y}) = (\rho_j(x)\cdot
y,s)\in C_j\setminus G_{j+1}$ where $\rho_j$ is the retraction from
$C_1\cup\dotsb\cup C_{j-1}$ to~$G_j$. The case~$j<i$ is symmetric.
For the embedding in~${M}^5$, we first embed $SO(2)$
in~${M}^2$ in the standard way. Subsequently we map $G_i$ to
$SO(2)\times\{(0,0,i)\}\subseteq{M}^5$. The mapping cylinder~$C_i$
maps to
\[
\left\{\left(\left(1-t\right)a+ta^n,t\left(1-t\right)a,i+t\right)\;|\;a\in SO(2), t\in
[0,1]\right\}\subseteq
{M}^2\times{M}^2\times{M}={M}^5
\]
$C$ is the union of the mapping cylinders. Finally,
our H-space is not a definable manifold. We suggest that taking a suitably
small tubular neighbourhood of it (in~${M}^5$) should yield a
homotopy equivalent locally definable manifold.
\ep

The following questions remain open: 

\begin{question} \label{ques} Assume $G$ is a locally definable abelian connected group. 
\begin{enumerate}
\item Is $G$ divisible? \cite{Edmundo2003a,Edmundo2005,Eleftheriou2012a}
\item Suppose $G$ is definably generated. Is $G$ a cover of a definable group? \cite{Eleftheriou2012a}
\item Is the torsion subgroup of $G$ contained in a definable set?
\item Is $\pi_1(G)$ finitely generated? 
\end{enumerate}
\end{question}

\bibliographystyle{amsalpha}

%\bibliography{BEM}
\providecommand{\bysame}{\leavevmode\hbox to3em{\hrulefill}\thinspace}
\providecommand{\MR}{\relax\ifhmode\unskip\space\fi MR }
% \MRhref is called by the amsart/book/proc definition of \MR.
\providecommand{\MRhref}[2]{%
  \href{http://www.ams.org/mathscinet-getitem?mr=#1}{#2}
}
\providecommand{\href}[2]{#2}

\end{document}